\documentclass[11pt,reqno]{amsart}
\usepackage[margin=1.05in]{geometry}
\usepackage{amsmath,amssymb,amsthm,mathtools}
\usepackage{microtype}
\usepackage[hidelinks,hypertexnames=false]{hyperref}

\newcommand{\Irr}{\operatorname{Irr}}
\newcommand{\cod}{\operatorname{cod}}
\newcommand{\Aut}{\operatorname{Aut}}
\newcommand{\Stab}{\operatorname{Stab}}
\newcommand{\Out}{\operatorname{Out}}
\newcommand{\PSU}{\operatorname{PSU}}
\newcommand{\PSp}{\operatorname{PSp}}
\newcommand{\SU}{\operatorname{SU}}

\newcommand{\PSL}{\operatorname{PSL}}

\newcommand{\OmegaG}{\operatorname{\Omega}}

\newtheorem{theorem}{Theorem}[section]
\newtheorem{proposition}[theorem]{Proposition}
\newtheorem{lemma}[theorem]{Lemma}

\theoremstyle{remark}
\newtheorem{remark}[theorem]{Remark}

\title{Element Orders and Character Codegrees of Finite Groups}
\author{YONG YANG}
\address{Department of Mathematics, Texas State University, San Marcos, TX 78666, USA.}
\email{yang@txstate.edu}
\date{}
\subjclass[2020]{20C15, 20D05}
\keywords{finite groups, irreducible characters, codegrees, element orders, quasisimple groups}

\begin{document}
\begin{abstract}
We prove Qian's conjecture on element orders and irreducible character codegrees.
\end{abstract}

\maketitle

\section{Introduction}

For $\chi\in\Irr(G)$, write
\[
\cod_G(\chi)=\frac{|G:\ker\chi|}{\chi(1)}.
\]
Qian conjectured that, for every finite group $G$ and every $g\in G$, there exists $\chi\in\Irr(G)$ such that
\begin{equation}
 o(g)\mid \cod_G(\chi).
\tag{1.1}
\end{equation}
He proved the conjecture for solvable groups \cite[Theorem~B]{Qian2021}. Madanha proved it for almost simple groups \cite[Theorem~A]{Madanha2023}; this includes the nonsolvable symmetric and alternating groups. Akhlaghi, Pacifici and Sanus proved it when $F(G)=1$ and showed that every minimal normal subgroup of a minimal counterexample is abelian; see \cite[Theorem~A and Remark~3.1]{APS2024}.

We prove the following theorem, and thereby prove Qian's conjecture.

\begin{theorem}\label{thm:qian-main}
For every finite group $G$ and every $g\in G$, there exists $\chi\in\Irr(G)$ such that
\[
 o(g)\mid\cod_G(\chi).
\]
\end{theorem}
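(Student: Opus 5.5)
We argue by minimal counterexample. Let $G$ be a counterexample of minimal order and $g\in G$ with $o(g)\nmid\cod_G(\chi)$ for all $\chi\in\Irr(G)$. Since characters of a quotient $G/N$ inflate to $G$ with the same codegree (the kernel contains $N$ and $|G:\ker\chi|$ is unchanged), minimality gives, for every $1\neq N\trianglelefteq G$, some $\chi\in\Irr(G/N)$ with $o(gN)\mid\cod(\chi)$. We first establish standard reductions. By \cite[Remark~3.1]{APS2024}, every minimal normal subgroup of $G$ is abelian. By \cite[Theorem~A]{APS2024} we may assume $F(G)\neq1$. The solvable case is \cite[Theorem~B]{Qian2021}, so $G$ is nonsolvable. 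We next show that $G$ has a unique minimal normal subgroup $N$, an elementary abelian $p$-group. If $N_1,N_2$ were distinct minimal normal subgroups, then $G$ embeds in $G/N_1\times G/N_2$, so $o(g)=\operatorname{lcm}(o(gN_1),o(gN_2))$. Taking $\chi_i$ realizing these orders, we look for an irreducible constituent of a character built from $\chi_1$ and $\chi_2$ whose codegree is divisible by both. This is delicate, and I would instead work prime by prime: reduce to $o(g)$ a prime power $q^a$ by showing that codegree divisibility can be assembled from $q$-parts. Under that reduction, the lcm issue disappears, since some single $\chi_i$ already has codegree divisible by $o(gN_i)=o(g)$ for one $i$. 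So we can take $g$ to be a $q$-element, and $G$ is monolithic with $N=\mathrm{soc}(G)$ elementary abelian of order $p^n$ and $N\le\ker$ failing only for faithful characters.

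Since $o(g)\nmid$ every codegree of $G/N$ while $o(gN)$ divides one, we must have $o(g)=q\cdot o(gN)$ with $q=p$ and $g^{o(gN)}\in N\setminus\{1\}$. Every faithful $\chi\in\Irr(G)$ has $\cod(\chi)=|G|/\chi(1)$. So it suffices to find a faithful $\chi$ with $|G|_p/\chi(1)_p\ge o(g)$. Let $\lambda\in\Irr(N)$ be nontrivial with inertia group $T$. Characters over $\lambda$ are faithful because $N$ is the unique minimal normal subgroup, and they have degree $|G:T|\,\psi(1)$ with $\psi\in\Irr(T\mid\lambda)$. We then need $|T|_p/\psi(1)_p\ge o(g)$. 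Choose $\lambda$ so that $g$, or a suitable conjugate, lies in $T$. This is possible when $g$ fixes a nontrivial linear character of $N$, which holds because $g$ acts on $N$ as a $p$-element, hence fixes nonzero vectors in the dual. Then consider $\langle g\rangle N\le T$. The character $\lambda$ extends to $\langle g,N\rangle$ when $\lambda(g^{o(gN)})$ allows it, and we choose $\lambda$ nontrivial on $g^{o(gN)}\in N$ (possible since the fixed space of $g$ on $\widehat N$ pairs nondegenerately with $N/[N,g]$, and we need $g^{o(gN)}\notin[N,g]$). The relevant characters then have codegree relative to $\langle g\rangle N$ detecting the full order of $g$. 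We lift to $T$ and then induce to $G$, controlling $p$-parts of degrees via Gallagher/Clifford theory and the inductive hypothesis applied to $T/\ker$, where $T<G$ when $N$ is not central.

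The main obstacle is this last step: ensuring both that $g^{o(gN)}\notin[N,g]$ and that some $\psi\in\Irr(T\mid\lambda)$ has small enough $p$-part. If $g^{o(gN)}\in[N,g]$, I would replace $g$ by an element of $gN$ with smaller order, and use that $o(g)$ equals $o(gN)$ times the order of $g^{o(gN)}$ to derive a contradiction. For the degree control, I would apply minimality to the proper subgroup $T$, or to $T/\ker\lambda$ when $\lambda$ extends, to obtain $\theta\in\Irr(T)$ with large codegree. Combining $\theta$ with an extension $\hat\lambda$ via Gallagher's theorem, then inducing, should give the required faithful $\chi$. The nonsolvability of $G$ enters only through the structure of $G/C_G(N)\le\mathrm{GL}(n,p)$ acting on $N$.
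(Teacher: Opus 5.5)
There are two genuine gaps.

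\textbf{The reduction to prime-power order is not valid.} Suppose you know, for each prime $q$, a character $\chi_q$ with $o(g)_q\mid\cod_G(\chi_q)$. This gives no single $\chi$ with $o(g)\mid\cod_G(\chi)$, and you give no way to ``assemble'' such characters. Producing one character is exactly what the theorem demands. Your monolithic reduction is therefore available only for $p$-elements.

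The paper never reduces to prime powers. In a minimal counterexample it lets $Z$ be the product of all minimal normal subgroups. The order of $g$ must drop modulo each of them, and this forces them to have distinct characteristics $p_1,\dots,p_k$, with $o(g)=o(gZ)p_1\cdots p_k$. Brauer's permutation lemma then gives nonprincipal $g$-invariant $\lambda_i\in\Irr(N_i)$; your extra requirement $g^{o(gN)}\notin[N,g]$ is unnecessary. After this, one \emph{relative} statement suffices: a $\chi$ over $\lambda=\prod\lambda_i$ with $o(gZ)\chi(1)\mid|G:Z|$. Such a $\chi$ is automatically faithful, so $o(g)\mid\cod_G(\chi)$.

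\textbf{The step you call the main obstacle is the whole difficulty, and your tools cannot handle it.}
\begin{itemize}
\item Applying minimality to $T=I_G(\lambda)$, or to a quotient, yields a character with large codegree that need not lie over $\lambda$.
\item Gallagher's theorem helps only if $\lambda$ extends to $T$, which fails in general.
\item When $\lambda$ is $G$-invariant, for instance when $N$ is central of order $p$, you have $T=G$ and nothing to induct on.
\end{itemize}
The absolute statement is too weak as an induction hypothesis. What is needed is Qian's relative divisibility $o(xZ)\chi(1)\mid|G:Z|\lambda(1)$ for some $\chi\in\Irr(\lambda^G)$, which Qian proves only when $G/Z$ is solvable.

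Extending it across a nonabelian chief factor $L/Z$ goes as follows in the paper. A character-triple isomorphism first makes $Z$ central and $\lambda$ faithful. One then needs $\theta\in\Irr(L\mid\lambda)$ with $\frac{o(xZ)}{o(x^cL)}\theta(1)\mid|L:Z|$, where $c$ is the $\langle x\rangle$-orbit length of $\theta$. This reduces to the quasisimple condition $rc\,\alpha(1)\mid|Q/C|$ for faithful characters $\alpha$ of covers $Q$ of simple groups, where $c$ is now an orbit length under an automorphism. The paper verifies this condition using:
\begin{itemize}
\item spin characters for $2.A_n$, with a separate argument at $n=11$;
\item semisimple characters and regular embeddings for groups of Lie type;
\item character tables for $24$ exceptional covers.
\end{itemize}
It then combines components following Navarro--Tiep.

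Your final sentence misplaces where nonsolvability matters. It enters through nonabelian chief factors of $T/N$ and their central extensions, not through $G/C_G(N)\le\mathrm{GL}(n,p)$.
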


We extend Qian's induction to nonabelian chief factors. For a simple factor the problem becomes a degree--orbit divisibility for a quasisimple group with cyclic center. This is the degree part of Navarro--Tiep's \cite[Condition~2.1]{NavarroTiep2026}; their component-permutation argument \cite[Theorem~2.4]{NavarroTiep2026} can then be used without the additional restriction condition. Double covers of alternating groups are treated with spin characters, the ordinary Lie-type covers with semisimple characters and regular embeddings, and the remaining twenty-four exceptional cyclic covers by their ordinary character tables.

\section{The relative induction}

We begin with Qian's relative result.

\begin{proposition}\label{prop:qian-relative}
Let $Z\lhd G$, let $\lambda\in\Irr(Z)$, and suppose that $G/Z$ is solvable. If $x\in I_G(\lambda)$, then there exists $\chi\in\Irr(\lambda^G)$ such that
\begin{equation}
 o(xZ)\chi(1)\mid |G:Z|\lambda(1).
\tag{2.1}
\end{equation}
\end{proposition}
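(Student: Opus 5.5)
We argue by induction on $|G:Z|$, following Qian's argument for solvable groups. Two reductions come first.

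\emph{Reduction to $\lambda$ invariant.} Put $T=I_G(\lambda)$, so $x\in T$. Suppose $T<G$. Then $|T:Z|<|G:Z|$ and $T/Z$ is solvable, so induction in $T$ gives $\psi\in\Irr(T\mid\lambda)$ with
\[
o(xZ)\psi(1)\mid |T:Z|\lambda(1).
\]
By the Clifford correspondence, $\chi=\psi^G$ is irreducible and lies over $\lambda$, and $\chi(1)=|G:T|\psi(1)$. Multiplying the displayed divisibility by $|G:T|$ gives (2.1). So we may assume $\lambda$ is $G$-invariant.

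\emph{The case $G/Z$ cyclic.} If $G/Z$ is cyclic, then $\lambda$ extends to some $\chi\in\Irr(G)$, so $\chi(1)=\lambda(1)$. Since $o(xZ)$ divides $|G:Z|$, (2.1) holds. If $G=Z\langle x\rangle$ we are in this case, so assume also $H:=Z\langle x\rangle<G$.

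\emph{Main step: a chief factor above $Z$.} Since $G/Z$ is solvable and nontrivial, choose $M\lhd G$ with $Z\le M$ and $M/Z$ a chief factor of $G$; it is an elementary abelian $p$-group. Let $\theta\in\Irr(M\mid\lambda)$. The aim is to choose $\theta$ carefully and then apply induction to the pair $(G,M,\theta)$, replacing $x$ by $x$ or a suitable power in $I_G(\theta)$. Induction gives $\chi$ over $\theta$ with
\[
o(x'M)\chi(1)\mid |G:M|\theta(1).
\]
The factor $|M:Z|\lambda(1)/\theta(1)$ must then absorb the loss from $o(xZ)$ down to $o(x'M)$ and the orbit index $|G:I_G(\theta)|$. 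Since $\lambda$ is invariant and $M/Z$ is abelian, the character theory of the triple $(M,Z,\lambda)$ is controlled. Either $\lambda$ extends to $M$, and $\Irr(M\mid\lambda)$ is a regular $\Irr(M/Z)$-orbit via Gallagher; or the pair is fully ramified, or it is partially ramified with a radical of the associated form. So $\theta(1)/\lambda(1)$ divides $|M:Z|^{1/2}$ and is a power of $p$.

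Next we deal with $x$ acting on $\Irr(M\mid\lambda)$. Work in $H=Z\langle x\rangle$, or in $M\langle x\rangle$. The $\langle x\rangle$-orbits on $\Irr(M\mid\lambda)$ have $p$-power size when $x$ is a $p$-element. In general, split $x=x_px_{p'}$. By coprime action (Glauberman correspondence), $x_{p'}$ fixes some $\theta$. Then choose $\theta$ so that the $\langle x_p\rangle$-orbit is as small as possible, compensating orbit size against the drop $o(xZ)/o(xM)$, which is a power of $p$ bounded by $|\langle x\rangle M:\langle x\rangle Z|$. The counting to carry out is a $p$-part inequality: if $x^{p^a}\in M$ with $p^a$ exactly the drop, then the orbit length of $\theta$ under $\langle x\rangle$, times $\theta(1)/\lambda(1)$, times $p^a$, divides $|M:Z|$. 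This is Qian's lemma for abelian chief factors, proved by looking at the linear action of $x_p$ on the $\mathbb F_p$-module $M/Z$ and its dual. A fixed point argument uses the fact that $x^{p^a}\in M$ forces $p^a$ to be at most the length of a Jordan block.

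I expect this $p$-part bookkeeping to be the main obstacle. This is especially so in the non-extendible case, where $\theta$ is determined by a character of a radical and the relevant module is a quotient of $M/Z$. I would first try to reduce to the case where $\lambda$ is faithful and $Z$ is central, by passing to a character triple isomorphic to $(G,Z,\lambda)$ with $Z$ cyclic central. I would then prove the inequality there directly from the structure of the commutator form on $M/Z$.
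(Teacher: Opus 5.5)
Your reductions (to $\lambda$ $G$-invariant, and the cyclic case) are correct, and the overall shape follows the paper's outline of Qian's argument. The target you eventually write down is also the right one. With $c=|\theta^{\langle x\rangle}|$ and $p^a=o(xZ)/o(xM)$, one needs $p^a c\,\theta(1)/\lambda(1)\mid |M:Z|$. This uses $c\mid o(xM)$, which holds because $x^{o(xM)}\in M$ fixes $\theta$, so $o(xZ)/o(x^cM)=p^ac$. No factor $|G:I_G(\theta)|$ has to be absorbed, since the statement already allows non-invariant characters.

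But the proposal stops exactly at this point. Choosing $\theta$ and proving this divisibility is the entire content of the proposition, and you leave it as ``the main obstacle.'' The heuristics you offer do not lead to a proof. Your bound on the drop by $|\langle x\rangle M:\langle x\rangle Z|$ is the wrong quantity. Your Jordan-block remark points the wrong way: $x^{p^a}\in M$ makes $x^{p^a}$ act trivially on the abelian group $M/Z$, which bounds the Jordan blocks of $x_p$ by $p^a$, not conversely.

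The missing ingredients are these.

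\emph{The drop is at most $p$.} We have $o(xZ)/o(xM)=|(\langle x\rangle Z\cap M)/Z|$. This is the order of a cyclic subgroup of the elementary abelian group $M/Z$, so $p^a\in\{1,p\}$. Without this you cannot close even the fully ramified case.

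\emph{There is no partially ramified case.} After the character-triple reduction you mention ($Z\le Z(G)$, $\lambda$ faithful linear), the form $(aZ,bZ)\mapsto\lambda([a,b])$ on $M/Z$ has a $G$-invariant radical. Since $M/Z$ is a chief factor, that radical is trivial or all of $M/Z$.

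\emph{Fully ramified case.} Here $\theta$ is unique, so $c=1$, and $\theta(1)=e$ with $e^2=|M:Z|$. Hence $p\mid e$ and $p^a e\mid e^2$.

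\emph{Extension case.} Here $M$ is abelian and $\Irr(M\mid\lambda)$ is an $\Irr(M/Z)$-torsor of linear characters. If $p^a=p$, the key point is that $m=x^{o(xM)}\in M\setminus Z$ is centralized by $x$. So $\langle x\rangle$ preserves the set of extensions of $\lambda$ taking a fixed value at $m$, and this set is an $\Irr(M/Z\langle m\rangle)$-torsor of size $|M:Z|/p$. The Hall $p'$-subgroup of $\langle x\rangle$ fixes some $\theta$ in it by coprime action (Glauberman's lemma). Therefore $c$ is a power of $p$ with $c\le |M:Z|/p$, and $pc\mid |M:Z|$. If $p^a=1$, the same argument on the full torsor gives $c\mid |M:Z|$.

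This is the step the paper delegates to Qian's Proposition~2.2. Without it your outline is a plan, not a proof.
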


This is \cite[Proposition~2.2]{Qian2021}. We recall the part of the proof that is relevant here. If $\lambda$ is not $G$-invariant, one passes to $I_G(\lambda)$ and then induces back. After replacing the character triple by an isomorphic one, one may assume that $Z\le Z(G)$ and that $\lambda$ is faithful and linear. Let $L/Z$ be a chief factor. Since $G/Z$ is solvable, $L/Z$ is elementary abelian.

Qian then separates two possibilities. In the fully ramified case there is a unique character $\theta\in\Irr(L\mid\lambda)$ and
\[
 \lambda^L=e\theta,\qquad \theta(1)=e,\qquad |L:Z|=e^2.
\]
The induction hypothesis applied above $L$ gives the required divisibility. In the extension case $\lambda$ has $|L:Z|$ extensions to $L$, all linear. One chooses an extension with a controlled orbit under the $p$-part of $\langle x\rangle$ and then uses the elementary abelian structure of $L/Z$ to show that the only possible loss in element order is absorbed by the orbit size. The solvability assumption enters exactly through the fact that $L/Z$ is abelian; see \cite[Proposition~2.2]{Qian2021}.

\begin{proposition}\label{prop:relative-reduction}
Assume that whenever $Z\le Z(G)$, $\lambda\in\Irr(Z)$ is faithful, $x\in G$, and $L/Z$ is a nonabelian chief factor of $G$, there exists $\theta\in\Irr(L\mid\lambda)$ such that, with
\[
 c=|\theta^{\langle x\rangle}|,
\]
one has
\begin{equation}
 \frac{o(xZ)}{o(x^cL)}\cdot\theta(1)\mid |L:Z|.
\tag{2.2}
\end{equation}
Then Proposition~\ref{prop:qian-relative} holds without the hypothesis that $G/Z$ is solvable.
\end{proposition}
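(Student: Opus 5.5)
We argue by induction on $|G:Z|$, following the structure of Qian's proof of Proposition~\ref{prop:qian-relative}. The reductions used there do not depend on solvability, so we apply them first. If $\lambda$ is not $G$-invariant, put $T=I_G(\lambda)$. Since $x\in T$ and $|T:Z|<|G:Z|$, induction gives $\psi\in\Irr(T\mid\lambda)$ with $o(xZ)\psi(1)\mid |T:Z|\lambda(1)$. Clifford correspondence gives $\chi=\psi^G\in\Irr(\lambda^G)$. Multiplying by $|G:T|$ yields (2.1). If $\lambda$ is invariant, replace $(G,Z,\lambda)$ by an isomorphic character triple with $Z\le Z(G)$ and $\lambda$ faithful and linear. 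Such an isomorphism preserves the ratios $\chi(1)/\lambda(1)$, the index $|G:Z|$ and the order $o(xZ)$, so (2.1) is unchanged. Now let $L/Z$ be a chief factor of $G$. If it is abelian, Qian's argument applies verbatim, because it uses only that $L/Z$ is elementary abelian together with induction above $L$. It therefore remains to treat a nonabelian $L/Z$.

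For nonabelian $L/Z$, take $\theta\in\Irr(L\mid\lambda)$ satisfying (2.2), and set $c=|\theta^{\langle x\rangle}|$. The element $y=x^c$ stabilises $\theta$. Let $T=I_G(\theta)$, so that $y\in T$ and $|T:L|<|G:Z|$. Applying induction to $(T,L,\theta,y)$ gives $\psi\in\Irr(T\mid\theta)$ with
\[
 o(yL)\,\psi(1)\mid |T:L|\,\theta(1).
\]
Put $\chi=\psi^G$. It is irreducible and lies over $\lambda$, and $\chi(1)=|G:T|\psi(1)$. Hence
\[
 o(yL)\,\chi(1)\mid |G:L|\,\theta(1).
\]
Multiply this by $o(xZ)/o(yL)$, which is an integer because $yL$ is a power of the image of $xZ$. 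Using (2.2),
\[
 o(xZ)\chi(1)\mid |G:L|\cdot\frac{o(xZ)}{o(x^cL)}\theta(1)\ \Big|\ |G:L|\,|L:Z|=|G:Z|\lambda(1),
\]
since $\lambda(1)=1$. This gives (2.1).

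The main obstacle is the bookkeeping in the second step. First, we must check that the orbit of $\theta$ under $\langle x\rangle$ is accounted for. We do not need $c$ to divide $|G:T|$ explicitly: $c$ enters only through $y=x^c$, and the loss $o(xZ)/o(yL)$ is absorbed entirely by (2.2). Second, we must make sure the induction is well founded. The triple $(T,L,\theta)$ has $|T:L|<|G:Z|$, and the inductive statement, namely Proposition~\ref{prop:qian-relative} without solvability, is applied to an arbitrary normal subgroup with an arbitrary character. The hypothesis of Proposition~\ref{prop:relative-reduction} is needed only after the reduction to central $Z$, and there it is used exactly once.

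The base case is $G=Z$. There $o(xZ)=1$ and $\chi=\lambda$ works.

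Finally, we confirm that the isomorphism of character triples does not disturb the role of $x$. We pass through $I_G(\lambda)$ before making $\lambda$ central, so $x$ still lies in the group. Its image in $G/Z$ is unchanged, since the character triple isomorphism is the identity on the quotient $G/Z$. Hence $o(xZ)$ is preserved. In the new group the element used in (2.2) is any preimage of $xZ$; the quantities $o(xZ)$, $o(x^cL)$ and $c$ depend only on $xZ$, because the central $Z$ acts trivially on $\Irr(L)$.
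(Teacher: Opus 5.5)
Your proposal is correct and follows the paper's proof step for step. The shared steps are: induction on $|G:Z|$; the Clifford reduction to $G$-invariant $\lambda$; the character-triple reduction to $Z\le Z(G)$ with $\lambda$ faithful and linear; deferral to Qian for abelian chief factors; and, in the nonabelian case, induction applied to $(I_G(\theta),L,\theta,x^c)$, then inducing to $G$ and combining with (2.2). Your added observation that $c$ and $o(x^cL)$ depend only on $xZ$, because $Z$ is central, usefully justifies the paper's ``suppress the stars'' step, which the paper leaves implicit.
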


\begin{proof}
We argue by induction on $|G:Z|$. Every later application of the induction hypothesis has a strictly smaller quotient index. Suppose first that $\lambda$ is not $G$-invariant and set $T=I_G(\lambda)$. Since $x\in T<G$, induction in $T$ gives $\xi\in\Irr(T\mid\lambda)$ with
\[
 o(xZ)\xi(1)\mid |T:Z|\lambda(1).
\]
Then $\chi=\xi^G$ is irreducible by Clifford theory and
\[
 \chi(1)=|G:T|\xi(1),
\]
so
\[
 o(xZ)\chi(1)\mid |G:Z|\lambda(1).
\]
Thus we may assume that $\lambda$ is $G$-invariant.

By the character-triple reduction used in \cite[proof of Proposition~2.2]{Qian2021}, equivalently \cite[Theorem~11.28]{Isaacs1976}, we may replace $(G,Z,\lambda)$ by an isomorphic character triple $(G^*,Z^*,\lambda^*)$ with $Z^*\le Z(G^*)$ and $\lambda^*$ faithful and linear. The triple isomorphism identifies $G/Z$ with $G^*/Z^*$; we take $x^*Z^*$ corresponding to $xZ$. Thus $o(x^*Z^*)=o(xZ)$, and inertia groups and relative character degrees are preserved. We therefore suppress the stars and assume that $Z\le Z(G)$ and $\lambda$ is faithful and linear. Let $L/Z$ be a chief factor of $G$.

If $L/Z$ is abelian, it is elementary abelian. The remainder of Qian's proof of \cite[Proposition~2.2]{Qian2021} uses only this fact together with the induction hypothesis for smaller quotient index, so the same argument applies here. We therefore assume that $L/Z$ is nonabelian and choose $\theta$ as in the hypothesis. Put
\[
 T=I_G(\theta),\qquad y=x^c.
\]
By the definition of $c$, we have $y\in T$. The character $\theta$ is $T$-invariant and $|T:L|<|G:Z|$, so induction applied to $(T,L)$ gives $\xi\in\Irr(T\mid\theta)$ satisfying
\begin{equation}
 o(yL)\xi(1)\mid |T:L|\theta(1).
\tag{2.3}
\end{equation}
Now $\chi=\xi^G\in\Irr(G)$ and $\chi(1)=|G:T|\xi(1)$. Multiplying (2.3) by $|G:T|$ gives
\[
 o(yL)\chi(1)\mid |G:L|\theta(1).
\]
Combining this with (2.2), and recalling that $y=x^c$, we obtain
\[
 o(xZ)\chi(1)\mid |G:Z|.
\]
Since $\lambda(1)=1$ in the reduced triple, this is exactly the desired relative divisibility.
\end{proof}

\section{One simple component}

Assume first that
\[
 L/Z\cong S
\]
is nonabelian simple. Set
\begin{equation}
 d=o(xL),\qquad n=o(xZ),\qquad r=\frac nd.
\tag{3.1}
\end{equation}
Then $x^d\in L$, and the image $x^dZ$ has order $r$ in $L/Z$.

Let $\theta\in\Irr(L\mid\lambda)$ and put
\[
 c=|\theta^{\langle x\rangle}|.
\]
Since $x^d\in L$, conjugation by $x^d$ is inner on $L$ and hence fixes every irreducible character of $L$. Therefore
\begin{equation}
 c\mid d.
\tag{3.2}
\end{equation}
It follows that
\[
 o(x^cL)=\frac dc
\]
and hence
\begin{equation}
 \frac{o(xZ)}{o(x^cL)}=rc.
\tag{3.3}
\end{equation}
Thus the local condition (2.2) becomes
\begin{equation}
 rc\,\theta(1)\mid |S|.
\tag{3.4}
\end{equation}

Since $L/Z$ is perfect,
\[
 L=L'Z.
\]
Put
\[
 Q=L',\qquad C=Q\cap Z.
\]
Then $Q/C\cong L/Z\cong S$. Moreover $C=Z(Q)$. Indeed, $C\le Z(Q)$ because $C\le Z$, while $Z(Q)C/C\le Z(Q/C)=1$. Thus
\begin{equation}
 Z(Q)=C,\qquad Q/C\cong S.
\tag{3.5}
\end{equation}
Since $C\le Z$ and $\lambda$ is faithful and linear, $C$ is cyclic. Put
\[
 \nu=\lambda_C.
\]
Then $\nu$ is faithful. For $u\in Q$, let $\iota_u$ denote the inner automorphism of $Q$ given by $v\mapsto u^{-1}vu$. If $\alpha\in\Irr(Q\mid\nu)$, define
\[
 \theta(qz)=\alpha(q)\lambda(z)\qquad(q\in Q,\ z\in Z).
\]
This is well defined because $Q\cap Z=C$ and $\alpha_C=\alpha(1)\nu=\alpha(1)\lambda_C$. It is an irreducible character of $L=QZ$, lies over $\lambda$, and has degree $\alpha(1)$. Conversely every character in $\Irr(L\mid\lambda)$ is obtained in this way.

Conjugation by $x$ induces an automorphism $a\in\Aut(Q)$ preserving $\nu$. Write
\[
 x^d=qz\qquad(q\in Q,\,z\in Z).
\]
Then
\begin{equation}
 a^d=\iota_q,\qquad o(qC)=r.
\tag{3.6}
\end{equation}
Also $a(q)=q$. To see this, note that $x$ centralizes $x^d=qz$ and $z\in Z\le Z(G)$, so $x$ centralizes $q$.

\begin{theorem}\label{thm:QL}
Let $Q$ be quasisimple, let $C=Z(Q)$, let $\nu\in\Irr(C)$ be faithful, and let $a\in\Aut(Q)$ preserve $\nu$. Suppose that $q\in Q$ and $d\ge1$ satisfy
\begin{equation}
 a(q)=q,\qquad a^d=\iota_q.
\tag{3.7}
\end{equation}
Set $r=o(qC)$. Then there exists $\alpha\in\Irr(Q\mid\nu)$ such that, with
\[
 c=|\alpha^{\langle a\rangle}|,
\]
one has
\begin{equation}
 rc\,\alpha(1)\mid |Q/C|.
\tag{3.8}
\end{equation}
\end{theorem}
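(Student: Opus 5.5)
The plan is to prove Theorem~\ref{thm:QL} by the classification of finite simple groups together with the known Schur multipliers. Since $C$ is cyclic, $Q$ is a cyclic quotient of the universal cover of $S=Q/C$. The first reduction concerns the number $rc$. The automorphism $a$ fixes $q$, so the image $\bar a\in\Out(S)$ has order dividing $d$. Write $\delta=o(\bar a)$; then $c\mid\delta$, because $a^{\delta}$ is inner and so fixes $\alpha$. The factor $r$ is the order of an element of $S$ centralized by $a$, and $q$ is fixed by $a$ with $a^d=\iota_q$. We then aim to produce a character $\alpha\in\Irr(Q\mid\nu)$ that is $a$-invariant, so $c=1$, and whose degree satisfies $\alpha(1)\,|\,|S|/r$. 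If invariance fails, we instead pay for the orbit length $c$ out of the $p'$-part of $|S|/\alpha(1)$. The target is therefore a character of $\ell'$-degree-type, meaning $|S|/\alpha(1)$ is divisible by every element order $r$ that can occur. When $C=1$, Theorem~\ref{thm:QL} reduces to Madanha's almost simple result \cite[Theorem~A]{Madanha2023} applied to $S\langle a\rangle$, and we invoke that directly.

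For faithful $\nu$ with $C\neq1$ we split into three families, following the outline in the Introduction.

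\emph{Alternating groups.} Here $Q=2.A_n$ and $a$ is induced from $S_n$. We take a basic spin character, or a spin character labelled by a suitable bar partition. Its degree is a power of $2$ times a quotient of factorials, so $|A_n|/\alpha(1)$ is large. The associate spin pair is either swapped by the outer automorphism or both members are fixed, which gives $c\le2$. We then check that $2r\alpha(1)$ divides $|A_n|$ using $r\le$ the largest element order of $A_n$. The degree formula allows this check by comparing $p$-adic valuations for each prime $p$ dividing $r$.

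\emph{Groups of Lie type in characteristic $p$ with $C$ a quotient of the ordinary multiplier.} We realize $Q$ as a quotient of $G^F$ and embed it via a regular embedding $G\hookrightarrow\tilde G$. For the semisimple element $q$ we choose a semisimple character $\chi_s$ of $\tilde G^F$ with central character extending $\nu$. We take $s$ in a maximal torus dual to a torus containing a Sylow-type cyclic subgroup related to $\langle q\rangle$. Then $\chi_s(1)=|G^F:C_{G^{*F}}(s)|_{p'}$, so
\[
|S|/\chi_s(1)\;\sim\;|C_{G^*}(s)^F|_{p'}\cdot q^{N},
\]
which contains the torus order and hence $r$. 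The $p$-part of $r$ is absorbed by $q^N$. We choose $s$ to be rational, with its class stable under field, graph, and diagonal automorphisms, so that a constituent $\alpha$ of $\chi_s|_Q$ is stabilized by $a$ up to the diagonal orbit. That orbit length is then bounded by $|\tilde G^F:G^FZ|$, which also divides $|S|/\alpha(1)$ through the centralizer. When $q$ is not semisimple, we apply the same construction to the semisimple part of $q$ and use the unipotent part's order, a power of $p$, against $q^N$.

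\emph{Exceptional covers.} There are twenty-four exceptional cyclic covers (sporadic covers, exceptional multipliers of small Lie type, $6.A_6$, $6.A_7$, and so on). For these we check directly from the ordinary character tables, using the finitely many possible pairs $(\bar a,r)$.

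The main obstacle is the Lie-type case. There we must choose a single $s$ that simultaneously has a centralizer divisible by $r$ (which depends on $q$ and thus on $a$) and produces an $\langle a\rangle$-stable character, with no leftover orbit under diagonal automorphisms. The first attempt would be to take $s$ regular in a Coxeter-type torus $T^*$ with $|T^*|$ divisible by $r$, arranged so that $\langle q\rangle$ lies in a dual torus. Failing that, we would fall back on a Zsigmondy-prime argument, comparing $|S|_\ell$ with $\chi_s(1)_\ell$ prime by prime.
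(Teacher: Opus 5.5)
Your overall architecture matches the paper's: trivial center, $2.A_n$ via spin characters, ordinary Lie-type covers via regular embeddings and semisimple characters, and a finite residue read from character tables. The Lie-type step, which you yourself flag as the main obstacle, is left open, and the mechanism you propose for it fails. You want a semisimple label $s$ whose class is $a$-stable, whose centralizer contains a torus accounting for $r$, and whose series has central character $\nu$. These cannot always be had together, and sometimes no $a$-invariant $\alpha$ works at all.

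Take $Q=\mathrm{SL}_2(49)$, $\phi$ the field automorphism of order $2$, $q_0=\mathrm{diag}(3,5)\in\mathrm{SL}_2(7)$ of order $6$, $a=\iota_{q_0}\phi$, $q=q_0^2$ and $d=2$. Then $a(q)=q$, $a^2=\iota_q$, $r=3$, and the $\langle a\rangle$-orbits on $\Irr(Q)$ are the $\phi$-orbits. A faithful principal series character $R_T(\lambda)$ with $\lambda\in\Irr(C_{48})$ needs $16\mid o(\lambda)$, while $\phi$-stability needs $\lambda^7=\lambda^{\pm1}$. For the discrete series, $\theta\in\Irr(C_{50})$ is $\phi$-stable only if $\theta^2=1$. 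So the only $\phi$-invariant faithful characters have degree $24$, and $3\cdot 24\nmid |S|=2^4\cdot3\cdot5^2\cdot7^2$. An orbit of length $2$ must therefore be paid for; the degree-$50$ principal series works, since $3\cdot2\cdot50\mid|S|$.

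Your remark that the orbit length is bounded by $|\tilde G^F:G^FZ|$, which ``also divides $|S|/\alpha(1)$'', does not achieve this. The numbers $r$ and $c$ must divide $|S|/\alpha(1)$ jointly, and a bound is not a divisibility. The Coxeter-torus fallback fails as well, because $r$ is an arbitrary element order (here $3\nmid 50$).

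The missing idea is to pay for the orbit inside a torus through the semisimple part of $q$. The paper does this as follows.
\begin{itemize}
\item Lift $q$ to $\widehat q=su\in\mathbf G^F$, so that $s^{\widehat a}=sk$ with $k\in\ker\widehat\nu$.
\item Let $\mathbf T$ be a maximal torus of the connected group $C_{\mathbf G}(s)$; after an inner adjustment by $C_{\mathbf G}(s)^F$, $\widehat a$ stabilizes $\mathbf T$.
\item Extend $\widehat\nu$ to an $\widehat a$-invariant $\mu$ on $H=\widehat C\langle s\rangle$, where $|H:\widehat C|=r_{p'}$.
\item Apply Lemma~\ref{lem:torsor} to the $\Irr(T/H)$-torsor $\Irr(T\mid\mu)$. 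This gives $\lambda$ with orbit length $d\mid |T:H|$, hence $r_{p'}d\mid |T:\widehat C|$.
\item The semisimple character $\tilde\rho$ of $\widetilde{\mathbf G}^F$ attached to the dual of $(\mathbf T,\lambda)$ satisfies $\tilde\rho(1)\mid|\mathbf G^F:T|_{p'}$. On $\mathbf G^F$ it splits into $e$ constituents $\rho$ with $e\rho(1)=\tilde\rho(1)$.
\item Sp\"ath's equivariance shows that powers of $\widehat a$ fixing $\lambda$ stabilize this set of constituents. Lemma~\ref{lem:clifford-torsor} then yields a constituent with $c\mid de$, whence $r_{p'}c\chi(1)\mid|Q:C|_{p'}$.
\end{itemize}
It is the splitting $e\rho(1)=\tilde\rho(1)$, not $|S|/\alpha(1)$, that absorbs the diagonal part of the orbit.

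There are also some smaller points. For $C=1$, simply take $\alpha=1_Q$; Madanha's codegree theorem is not the statement you need there. In the alternating case, bounds on the size of $r$ give no divisibility. The argument is $2$-adic, using $v_2(r)\le\lfloor\log_2(n-2)\rfloor$. With the basic spin character, your uniform factor $2$ already fails at $n=10$, $r=8$, so you need $c=1$ for even $n$. At $n=11$, $r=8$ with $a$ outer, the basic spin character fails outright, and you need another one, such as the degree-$616$ constituent for $(8,2,1)$.
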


Since $a^d$ is inner, $c\mid d$. The divisibility in (3.8) depends only on $r$, $c$, and $\alpha(1)$.

\begin{remark}\label{rem:NT}
The divisibility (3.8) is exactly the degree part of Navarro--Tiep's Condition~2.1 in the present situation. Let
\[
 V/C=\langle qC\rangle.
\]
Then $|V:C|=r$ and
\[
 c=|\langle a\rangle:\Stab_{\langle a\rangle}(\alpha)|.
\]
Their degree condition is
\[
 \alpha(1)c\mid |Q/C:V/C|=\frac{|Q/C|}{r},
\]
which is equivalent to (3.8); compare \cite[Lemma~3.3]{NavarroTiep2026}. What is not required for Qian's conjecture is their additional condition that every relevant linear character of $V$ occur in $\alpha_V$. That stronger restriction condition is precisely what they use later to place the product character over a prescribed linear character when several components occur.
\end{remark}

\section{Multiple simple components}

We use the component-permutation construction from the proof of \cite[Theorem~2.4]{NavarroTiep2026}. Only the numerical part is needed, since we prescribe the faithful character of the central subgroup and no character of a larger cyclic subgroup.

\begin{proposition}\label{prop:product}
Assume that Theorem~\ref{thm:QL} holds for every quasisimple component occurring in $L'$.  If
\[
 L/Z\cong S^t
\]
is a nonabelian chief factor, then there exists $\theta\in\Irr(L\mid\lambda)$ satisfying \emph{(2.2)}.
\end{proposition}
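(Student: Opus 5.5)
We reduce to a single component in the same way as in Section~3 and then handle the permutation of the components.

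\emph{Setup.} Put $Q=L'$ and $C=Q\cap Z$, so that $L=QZ$ and $Q/C\cong S^t$. Then $Q$ is a central product of the components $Q_1,\dots,Q_t$. Each $Q_i$ is quasisimple, with $Q_i/Z(Q_i)\cong S$, and $x$ permutes the $Q_i$ transitively up to the action of $G$. The character $\nu=\lambda_C$ is faithful and $C$ is cyclic. As in Section~3, characters $\theta\in\Irr(L\mid\lambda)$ correspond to characters $\alpha\in\Irr(Q\mid\nu)$ with the same degree and the same $\langle x\rangle$-orbit length. Characters of $Q$ over $\nu$ are images of tensor products $\alpha_1\times\cdots\times\alpha_t$ on the direct product $Q_1\times\cdots\times Q_t$. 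Here each $\alpha_i$ lies over the restriction $\nu_i$ of $\nu$ to $Z(Q_i)\le C$, which is faithful on $Z(Q_i)$. So it suffices to choose the $\alpha_i$ and verify $(2.2)$. Let $d=o(xL)$ and $n=o(xZ)$.

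\emph{Orbit-by-orbit construction.} Consider the orbits of $\langle x\rangle$ on $\{Q_1,\dots,Q_t\}$, and fix one orbit $Q_1,\dots,Q_m$ with $x^{-1}Q_jx=Q_{j+1}$ (indices mod $m$). The element $x^m$ normalizes $Q_1$ and induces $a\in\Aut(Q_1)$ preserving $\nu_1$. Let $d_1$ be the order of $x^m$ modulo $L$ on this component, so $x^{md_1}\in L$. Write the $Q_1$-part of $x^{md_1}$ as $q_1$. As in $(3.6)$, $a^{d_1}=\iota_{q_1}$ and $a(q_1)=q_1$. Let $r_1=o(q_1Z(Q_1))$. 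Apply Theorem~\ref{thm:QL} to $(Q_1,\nu_1,a,q_1,d_1)$ to obtain $\alpha_1$ with orbit length $c_1$ under $\langle a\rangle$ and
\[
r_1c_1\alpha_1(1)\mid |S|.
\]
Then transport along the orbit, setting $\alpha_{j+1}=\alpha_j^{x}$. The product over the orbit is $x^m$-stable exactly up to the $\langle a\rangle$-orbit of $\alpha_1$. This is the Navarro--Tiep component-permutation construction. Its $\langle x\rangle$-orbit therefore has length $mc_1$, and its degree is $\alpha_1(1)^m$. Do this for every orbit, let $\theta$ correspond to the tensor product, and put $c=|\theta^{\langle x\rangle}|$. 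Then $c$ is the least common multiple of the orbit lengths $mc_1$.

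\emph{Numerical check.} We must show
\[
\frac{n}{o(x^cL)}\,\theta(1)\mid |S|^t.
\]
On each orbit, the $p$-parts of $n/o(x^cL)$ come from two sources. The first is the permutation part, which accounts for the factor $m$ already in $c$. The second is the order of the $Q$-part of $x^{d}$ modulo $C$, which is bounded by the orders $r_1$ of the components $q_1$. I will compute $x^{md_1}$ explicitly as a product of the conjugates $q_1,q_1^x,\dots$ across the orbit. This shows that the image of $x^d$ in $Q/C$ has order dividing $\operatorname{lcm}r_1$ over the orbits. Then $\frac{n}{o(x^cL)}$ divides the lcm of the quantities $r_1c_1 m$ divided by factors already absorbed by $c$. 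Using $m\ge1$ copies of $|S|$ per orbit and the bound $r_1c_1\alpha_1(1)\mid|S|$, we get
\[
r_1c_1\alpha_1(1)^m\mid |S|^m.
\]
This works because $\alpha_1(1)^{m-1}\mid|S|^{m-1}$, since character degrees of $Q_1$ over a faithful central character divide $|S|$. Taking the product over orbits gives the claim, as lcm divides product.

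\emph{Main obstacle.} The hardest step is the bookkeeping relating $o(x^cL)$, the per-orbit orders $d_1$, and $r$ when there are several orbits of different lengths. One difficulty is that $c$ is an lcm, not a product. Another is that the $C$-parts of the components of $x^{md_1}$ interact through the central product. I would first treat a single transitive orbit, where the computation reduces directly to $(3.3)$ with $d=md_1$. Then I would pass to several orbits by noting that both sides of the desired divisibility behave multiplicatively enough, since lcm divides product.
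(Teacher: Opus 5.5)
Your reduction to one $\langle x\rangle$-orbit at a time and your final lcm-versus-product step match the paper. The single-orbit step, however, has a genuine gap: the factor $m$ coming from the permutation of the components is never absorbed.

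Since $c\mid d$, the quantity in (2.2) is $n/o(x^cL)=rc$. Identify $\Irr(Q_j)$ with $\Irr(Q_1)$ via conjugation by $x^{j-1}$. Then $x$ acts on tuples by $(\gamma_1,\dots,\gamma_m)\mapsto(\gamma_m^{a},\gamma_1,\dots,\gamma_{m-1})$. So your constant transport $\alpha_{j+1}=\alpha_j^x$ gives a tuple fixed by $x^k$ only when $mc_1\mid k$, provided $c_1>1$. Hence (2.2) on that orbit requires $r_1\,m\,c_1\,\alpha_1(1)^m\mid |S|^m$. Your displayed conclusion $r_1c_1\alpha_1(1)^m\mid|S|^m$ simply drops $m$. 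That $m$ is ``already in $c$'' is exactly why it must be paid for on the left of (2.2).

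In general it cannot be paid for. Take $G=S_5\wr C_7$ with $Z=1$, $L=A_5^7$, and $x=(\tau,1,\dots,1)\sigma$, where $\tau$ is a transposition and $\sigma$ a $7$-cycle. Then $n=d=14$, $r=1$, $m=7$, $q_1=1$, and $a$ is conjugation by $\tau$. Theorem~\ref{thm:QL} may legitimately return $\alpha_1$ of degree $3$ with $c_1=2$, since $1\cdot 2\cdot 3\mid 60$. Your $\theta$ then has degree $3^7$ and orbit length $14$, but $14\cdot 3^7\nmid 60^7$.

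The missing idea is Navarro--Tiep's Lemma~2.3, which is \emph{not} the constant transport you attribute to them. One chooses the $\alpha_j$ in the transported $\langle a\rangle$-orbits non-constantly. In the example above, in the coordinates just described, the alternating tuple $(\alpha_1,\alpha_1^a,\alpha_1,\alpha_1^a,\alpha_1,\alpha_1^a,\alpha_1)$ is fixed by $x^2$.

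In general, the product can be chosen to be fixed by $x^{c_1 f_1}$. Here $f_1$ is the part of $m$ supported on the primes dividing $\gcd(m,r_1c_1)$. So the primes of $m$ coprime to $r_1c_1$ disappear from the orbit length. The surviving part satisfies $f_1\mid (r_1c_1)^{m-1}$ by their Lemma~2.2(i). This gives $r_1\,c_{\mathcal O}\,\beta(1)\mid r_1c_1f_1\,\alpha_1(1)^m\mid (r_1c_1\alpha_1(1))^m\mid |S|^m$, where $c_{\mathcal O}$ is the orbit length of the product $\beta$.

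With this choice in place of the constant transport, your multi-orbit bookkeeping finishes the proof exactly as in the paper: $c\mid\operatorname{lcm}_j c_j$, $r=\operatorname{lcm}_j r_j$, and the lcm divides the product.
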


\begin{proof}
Put
\[
 d=o(xL),\qquad n=o(xZ),\qquad r=\frac nd,
\]
and set $y=x^d\in L$.  Thus $o(yZ)=r$.  Write
\[
 L=L'Z,
\]
where $L'$ is a central product of quasisimple components. The cyclic group $\langle x\rangle$ permutes these components, and it suffices to consider one orbit at a time.

Let $Q_1,\ldots,Q_s$ be one such orbit, numbered so that
\[
 Q_i=Q_1^{x^{i-1}}.
\]
Since $x^d\in L$, conjugation by $x^d$ fixes every component, and hence $s\mid d$.  As in \cite[Theorem~2.4(c)]{NavarroTiep2026}, after changing central factors we may write the coordinates of $y$ on this orbit as
\[
 q_i=q_1^{x^{i-1}},\qquad q_1^{x^s}=q_1.
\]
Let
\[
 C_1=Z(Q_1),\qquad e=o(q_1C_1),
\]
and let $a$ be the automorphism of $Q_1$ induced by conjugation by $x^s$.  Then $a(q_1)=q_1$.  Moreover
\[
 a^{d/s}=\iota_{q_1}.
\]
Indeed, $a^{d/s}$ is induced by conjugation by $x^d=y$; on $Q_1$ the other component coordinates of $y$ commute with $Q_1$, while the central factor acts trivially.  Thus Theorem~\ref{thm:QL} applies.

Put $\nu_1=\lambda_{C_1}$.  There is
\[
 \alpha_1\in\Irr(Q_1\mid\nu_1)
\]
such that, if
\[
 \kappa=|\alpha_1^{\langle a\rangle}|,
\]
then
\begin{equation}
 e\kappa\alpha_1(1)\mid |S|.
\tag{4.1}
\end{equation}
Let $\sigma$ denote conjugation by $x$ on the disjoint union of the component character sets, so that $a=\sigma^s$ on $Q_1$. Let $\Omega_1$ be the $\langle a\rangle$-orbit of $\alpha_1$, and for $1\le i\le s$ put
\[
 \Omega_i=\sigma^{i-1}(\Omega_1)\subseteq\Irr(Q_i\mid\lambda_{Z(Q_i)}).
\]
The sets $\Omega_i$ are pairwise disjoint, $\sigma$ permutes them cyclically, and $\sigma^{s\kappa}$ fixes every member of their union. Thus the action factors through a cyclic group of order dividing $s\kappa$, as in \cite[Lemma~2.3]{NavarroTiep2026}.

Let
\[
 f=\gcd(s,e\kappa),
\]
let $s_0$ be the largest divisor of $s$ coprime to $f$, and put $f_1=s/s_0$. By \cite[Lemma~2.2(i)]{NavarroTiep2026},
\begin{equation}
 f_1\mid(e\kappa)^{s-1}.
\tag{4.2}
\end{equation}
Applying \cite[Lemma~2.3]{NavarroTiep2026} to the sets $\Omega_i$, with $n=s$, gives characters
\[
 \alpha_i\in\Omega_i\subseteq\Irr(Q_i\mid\lambda_{Z(Q_i)})
\]
such that their central product $\beta$ is irreducible, has degree
\[
 \beta(1)=\alpha_1(1)^s,
\]
and is fixed by $x^{\kappa f_1}$. If $c_{\mathcal O}$ denotes the $\langle x\rangle$-orbit length of $\beta$, then
\[
 c_{\mathcal O}\mid\kappa f_1.
\]
Consequently, by (4.1) and (4.2),
\[
 e\,c_{\mathcal O}\,\beta(1)
 \mid e\kappa f_1\alpha_1(1)^s
 \mid (e\kappa\alpha_1(1))^s
 \mid |S|^s.
\tag{4.3}
\]

Do this for every $\langle x\rangle$-orbit of components.  Let the corresponding integers be $e_j,c_j$, and let $\beta_j$ be the resulting product character on that orbit.  On intersections of components the characters $\beta_j$ have the central character prescribed by $\lambda$. Hence their tensor product factors through the central product $L'$, and, together with $\lambda$ on $Z$, gives
\[
 \theta\in\Irr(L\mid\lambda).
\]
Put
\[
 c=|\theta^{\langle x\rangle}|.
\]
Then
\[
 c\mid\operatorname{lcm}_j(c_j),
 \qquad
 r=o(yZ)=\operatorname{lcm}_j(e_j).
\]
Indeed, any power of $x$ fixing every $\beta_j$ fixes their product $\theta$, which gives the first divisibility. For each prime $p$,
\[
 v_p(r)+v_p(c)
 \le \max_j v_p(e_j)+\max_j v_p(c_j)
 \le \sum_j v_p(e_jc_j).
\]
It follows that
\[
 rc\,\theta(1)
 \mid \prod_j e_jc_j\beta_j(1)
 \mid \prod_j |S|^{s_j}
 =|L:Z|.
\tag{4.4}
\]
Finally $c\mid d$, since $x^d\in L$ acts by an inner automorphism on $L$.  Therefore
\[
 o(x^cL)=\frac dc,
\]
and
\[
 \frac{o(xZ)}{o(x^cL)}=\frac{n}{d/c}=rc.
\]
Equation (4.4) is exactly (2.2).
\end{proof}

\section{A cyclic torsor lemma}

\begin{lemma}\label{lem:torsor}
Let $X$ be a finite abelian group and let $\mathcal E$ be an $X$-torsor. Let $A=\langle a\rangle$ act on both $X$ and $\mathcal E$, compatibly in the sense that
\[
 (\eta e)^b=\eta^b e^b
 \qquad(\eta\in X,\ e\in\mathcal E,\ b\in A).
\]
Then some $e\in\mathcal E$ satisfies
\begin{equation}
 |A:\Stab_A(e)|\mid |X|.
\tag{5.1}
\end{equation}
\end{lemma}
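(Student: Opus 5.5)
The plan is to avoid cohomology entirely and combine an orbit-counting argument prime by prime with a descent to fixed-point sets. First replace $A$ by its image in the permutation group of $X\sqcup\mathcal E$, so that $A$ is finite cyclic of order $m$. Write $A=\prod_p A_p$ as the product of its Sylow subgroups $A_p=\langle b_p\rangle$, with $|A_p|=p^{k_p}$. For each prime $p$ put $p^{a_p}=|X|_p$; here $a_p=0$ is allowed. Let $h_p=b_p^{p^{a_p}}$, with $h_p=1$ when $a_p\ge k_p$. The orbit length of $e$ under $A$ divides $\prod_p p^{\min(a_p,k_p)}$, which divides $|X|$, as soon as $e$ is fixed by every $h_p$. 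So it suffices to produce a common fixed point of all the $h_p$.

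\emph{Step 1 (one prime, counting).} Let $X$ be a finite abelian group, $\mathcal E$ an $X$-torsor, and $P=\langle b\rangle$ a cyclic $p$-group acting compatibly. I claim some $e\in\mathcal E$ has $P$-orbit of size dividing $p^{a}$, where $p^a=|X|_p$. The $P$-orbits on $\mathcal E$ have $p$-power sizes and these sizes sum to $|\mathcal E|=|X|$. If every orbit had size at least $p^{a+1}$, then $p^{a+1}$ would divide $|X|$, which is a contradiction. An orbit of size dividing $p^a$ means exactly that $b^{p^a}$ fixes $e$.

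\emph{Step 2 (fixed points form a smaller torsor).} Let $H\le A$ and suppose $\mathcal E^H\neq\emptyset$. Then $\mathcal E^H$ is an $X^H$-torsor. Indeed, if $e,e'\in\mathcal E^H$ and $e'=\eta e$, then compatibility gives $\eta^h e=\eta e$ for $h\in H$. Simple transitivity then forces $\eta^h=\eta$, and conversely $X^H$ preserves $\mathcal E^H$. Since $A$ is abelian, $\mathcal E^H$ and $X^H$ are $A$-stable, and the compatibility condition persists. Moreover $|X^H|$ divides $|X|$, because $X^H$ is a subgroup of $X$.

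\emph{Step 3 (iterate over primes).} List the primes dividing $m$ as $p_1,\dots,p_\ell$ and proceed inductively. At stage $j$ we have an $A$-stable torsor $\mathcal E_j$ for an $A$-stable subgroup $X_j\le X$, with $\mathcal E_0=\mathcal E$ and $X_0=X$. Apply Step 1 to $A_{p_j}$ acting on $\mathcal E_j$. Since $|X_j|_{p_j}$ divides $p_j^{a_{p_j}}$, this yields a fixed point of $h_{p_j}$ in $\mathcal E_j$. Then pass to $\mathcal E_{j+1}=\mathcal E_j^{\langle h_{p_j}\rangle}$ and $X_{j+1}=X_j^{\langle h_{p_j}\rangle}$ via Step 2. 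The final torsor is nonempty and every element of it is fixed by all $h_p$, which gives (5.1).

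The only step that needs care is Step 1, specifically making sure that $p$-power orbit sizes bounded below by $p^{a+1}$ contradict $|\mathcal E|=|X|$. This also covers primes $p\nmid |X|$, where $a=0$ and the step produces a genuine fixed point. The descent in Step 2 is what lets the separate prime-by-prime choices be merged into a single $e$.
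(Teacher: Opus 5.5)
Your proof is correct, but it takes a different route from the paper's.

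\textbf{The paper's route.} The paper argues by induction on $|X|$. It takes a minimal $A$-invariant subgroup $Y\le X$, which is an elementary abelian $p$-group, and applies induction to the $X/Y$-torsor $\mathcal E/Y$. This gives an orbit of length $d\mid|X/Y|$ with point stabilizer $A_0$. Inside the fibre over that point, which is a $Y$-torsor, it uses coprime vanishing $H^1(B,Y)=0$ for the Hall $p'$-subgroup $B$ of $A_0$. This produces a point whose $A_0$-orbit has $p$-power length at most $|Y|$, hence dividing $|Y|$.

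\textbf{Your route.} You split the finite image of $A$ into its Sylow subgroups and treat one prime at a time. The $p$-part is handled by the class-equation count: a $p$-group acting on a set of size $|X|$ has an orbit of size dividing $|X|_p$. This count uses nothing about the torsor structure. The torsor structure enters only in your Step~2. There it guarantees that the fixed points of $h_p$ again form an $A$-stable torsor, now for $X^{\langle h_p\rangle}$, whose order still divides $|X|$. So the next prime can be processed inside it. Step~2 is exactly where the torsor hypothesis is indispensable: for a mere $A$-set of size $|X|$ the conclusion can fail. For example, $C_6$ acting on $5=2+3$ points has no orbit whose length divides $5$.

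\textbf{Comparison.} Your approach is more elementary, with no cohomology, no quotient torsors and no induction on $|X|$. Since nothing in it uses commutativity of $X$, it proves the lemma for an arbitrary finite group $X$. The paper's argument uses that $X$ is abelian, to make $Y$ elementary abelian and normal so that $\mathcal E/Y$ is an $X/Y$-torsor. In exchange, it fits the standard coprime-action framework.
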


\begin{proof}
We argue by induction on $|X|$. The assertion is trivial when $X=1$. Suppose $X\ne1$ and choose a minimal nontrivial $A$-invariant subgroup $Y\le X$. Since $A$ is cyclic and $Y$ is minimal as an $A$-module, $Y$ is an elementary abelian $p$-group for some prime $p$.

The quotient $\mathcal E/Y$ is an $A$-equivariant $X/Y$-torsor. By induction it has an $A$-orbit of length $d$ with
\[
 d\mid |X/Y|.
\]
Let $A_0$ be the stabilizer in $A$ of a point in this quotient orbit. The fiber above that point is a $Y$-torsor on which $A_0$ acts.

Let $B$ be a Hall $p'$-subgroup of $A_0$. Since $(|B|,|Y|)=1$, the standard coprime cohomology vanishing gives
\[
 H^1(B,Y)=0.
\]
Hence $B$ fixes a point of the $Y$-torsor. Choose such a point in the fiber. Since $A_0$ is cyclic, its stabilizer contains the Hall $p'$-subgroup $B$, so its $A_0$-orbit has $p$-power length $e$. As the orbit lies in a set of size $|Y|$, we have $e\le |Y|$, and hence $e\mid |Y|$. Therefore the corresponding point of $\mathcal E$ has full $A$-orbit length $de$, and
\[
 de\mid |X/Y|\,|Y|=|X|.
\]
This proves (5.1).
\end{proof}

\section{Double covers of alternating groups}

We verify Theorem~\ref{thm:QL} for $Q=2.A_n$ when $n\ge9$. The covers of $A_5,\ldots,A_8$ satisfy the stronger Navarro--Tiep condition by \cite[Proposition~3.4]{NavarroTiep2026}.

Let
\[
 Q=2.A_n,\qquad C=Z(Q)\cong C_2,
\]
and let $\nu$ be the faithful character of $C$. For $n\ge9$, the group $Q$ is the universal central extension of $A_n$. Hence every automorphism of $Q$ induces an automorphism of $A_n$, and the kernel of this action is trivial because $Q$ is perfect. Since $\Out(A_n)\cong C_2$ for $n\ge9$, every orbit of $\Irr(Q)$ under a cyclic subgroup of $\Aut(Q)$ has length at most $2$ modulo inner automorphisms.

The basic spin character of $2.A_n$ has degree
\begin{equation}
 \beta_n(1)=2^{\lfloor (n-2)/2\rfloor};
\tag{6.1}
\end{equation}
see \cite[Lemma~2.4]{BNOT2015}. The restriction rule for spin characters says that, for the strict partition $(n)$, the restriction from a double cover of $S_n$ to $2.A_n$ is irreducible when $n$ is even and splits into two equal-degree irreducible constituents when $n$ is odd; see \cite[Section~2.2]{BNOT2015}. In the first case the resulting character is fixed by the outer involution. In the second case the two constituents are interchanged by an element outside $2.A_n$, and hence by the nontrivial outer automorphism. Thus, for a basic spin constituent $\beta_n$,
\begin{equation}
 |\beta_n^{\langle a\rangle}|\le
 \begin{cases}
 1,& n\text{ even},\\
 2,& n\text{ odd}.
 \end{cases}
\tag{6.2}
\end{equation}

\begin{lemma}\label{lem:An2part}
If $y\in A_n$ and $2^k\mid o(y)$ with $k\ge1$, then
\begin{equation}
 2^k+2\le n.
\tag{6.3}
\end{equation}
Consequently
\begin{equation}
 v_2(o(y))\le \lfloor\log_2(n-2)\rfloor.
\tag{6.4}
\end{equation}
\end{lemma}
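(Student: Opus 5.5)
Since $y\in A_n$ is a permutation of $\{1,\dots,n\}$, its order is the least common multiple of its cycle lengths. We therefore work directly with the cycle decomposition. The whole argument rests on the parity constraint for even permutations.

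First, suppose $2^k\mid o(y)$ with $k\ge1$. Then $v_2$ of some cycle length is at least $k$. So $y$ has a cycle of length $\ell=2^k m$ with $m\ge1$, and in particular $\ell\ge 2^k$. A cycle of even length is an odd permutation. Since $y$ is even, it must contain another cycle of even length $\ell'\ge2$, disjoint from the first. The cycles are disjoint, so
\[
n\ge \ell+\ell'\ge 2^k+2,
\]
which is (6.3).

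For (6.4), if $v_2(o(y))=0$ there is nothing to prove, because $n\ge3$ for $A_n$ to contain an element of even order. Indeed, in our setting $n\ge9$, so $\lfloor\log_2(n-2)\rfloor\ge0$. Otherwise put $k=v_2(o(y))\ge1$. Then (6.3) gives $2^k\le n-2$, hence $k\le\log_2(n-2)$. Since $k$ is an integer, $k\le\lfloor\log_2(n-2)\rfloor$.

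The only point that needs care is the parity step: the extra even-length cycle is exactly what produces the $+2$ in (6.3). The statement is elementary otherwise, and I do not expect any real obstacle.
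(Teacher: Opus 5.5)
Your proof is correct and is essentially the paper's argument: a cycle of length divisible by $2^k$ is an odd permutation, so evenness of $y$ forces a second disjoint even-length cycle, giving $n\ge 2^k+2$, and (6.4) follows by taking $\log_2$ and using integrality. One small wording point: your remark about $n\ge3$ in the case $v_2(o(y))=0$ is a non sequitur, since that case concerns elements of odd order; what is actually needed is only that $\lfloor\log_2(n-2)\rfloor\ge0$, which holds for $n\ge3$ and in particular in the paper's range $n\ge9$.
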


\begin{proof}
Some cycle in the disjoint-cycle decomposition of $y$ has length divisible by $2^k$. That cycle has even length and is therefore an odd permutation. Since $y$ is even, the number of even cycles in its disjoint-cycle decomposition is even. Thus there is another even cycle, using at least two further letters. This gives (6.3), and (6.4) follows.
\end{proof}

\begin{proposition}\label{prop:Alt}
The condition in Theorem~\ref{thm:QL} holds for $Q=2.A_n$ for every $n\ge9$.
\end{proposition}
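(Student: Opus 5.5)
We take $\alpha=\beta_n$, a basic spin constituent of $2.A_n$. Since $\nu$ is faithful and $\beta_n$ is a spin (faithful) character, $\beta_n\in\Irr(Q\mid\nu)$. With $c=|\beta_n^{\langle a\rangle}|$, we must show
\[
 r\,c\,\beta_n(1)\mid |A_n|=\tfrac{n!}{2},
\]
where $r=o(qC)$ is the order of an element of $A_n$. The odd primes are easy. By (6.1) and (6.2), $c\,\beta_n(1)$ is a power of $2$. Also $r$ divides $\exp(A_n)$, and for an odd prime $p$ the $p$-part of any element order in $A_n$ is at most the largest power of $p$ that is $\le n$, which divides $n!/2$. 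So it suffices to compare $2$-parts:
\[
 v_2(r)+v_2(c)+\lfloor (n-2)/2\rfloor\le v_2(n!)-1 .
\]

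For the $2$-part we bound each term. Lemma~\ref{lem:An2part} gives $v_2(r)\le\lfloor\log_2(n-2)\rfloor$, with $v_2(r)=0$ allowed. Equation (6.2) gives $v_2(c)\le 1$, and $v_2(c)=0$ when $n$ is even. Legendre's formula gives $v_2(n!)=n-s_2(n)$, where $s_2(n)$ is the binary digit sum. The inequality therefore reduces to
\[
 \lfloor\log_2(n-2)\rfloor+\epsilon_n+\lfloor (n-2)/2\rfloor\le n-s_2(n)-1,
\]
where $\epsilon_n=1$ for $n$ odd and $0$ for $n$ even. Since $\lfloor (n-2)/2\rfloor\le n/2-1$, it is enough that
\[
 s_2(n)+\lfloor\log_2(n-2)\rfloor+\epsilon_n\le n/2 .
\]
We have $s_2(n)\le \lfloor\log_2 n\rfloor+1$, so the left side is roughly $2\log_2 n+2$. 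This is at most $n/2$ once $n\ge 16$ or so. For $9\le n\le 16$ we check the original inequality directly. For example, at $n=9$: $v_2(9!)-1=6$, while the left side is $2+1+3=6$, so it holds. At $n=10$: $7$ versus $3+0+4=7$. At $n=11$: $7$ versus $3+1+4=8$. This fails, so for $n=11$ the crude bound must be sharpened.

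The main obstacle is these borderline small odd $n$ (and possibly a few others), where the worst-case bounds on $v_2(r)$ and $c$ cannot both be attained. At $n=11$, $v_2(r)=3$ forces $qC$ to contain an $8$-cycle times a $2$-cycle. I would then use the constraint $a(q)=q$ with $a^d=\iota_q$, or else replace $\beta_n$ by a different spin constituent. The first thing to try is a non-basic spin character labelled by a strict partition such as $(n-1,1)$, which for odd $n$ is non-split on $2.A_n$. That gives $c=1$, at the cost of a larger degree; its degree is $2^{\lfloor (n-3)/2\rfloor}(n-2)$, which for $n=11$ is $2^4\cdot 9$ with $2$-part $2^4$. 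Then the $2$-count is $3+0+4=7\le 7$, which works. In general, for odd $n$ I would use the $(n-1,1)$ character whenever the basic one fails. This makes $c=1$ for all $n$ up to the degree's odd part, which divides $n!/2$ because it divides $(n-2)\cdot n!$-type products. The finitely many remaining $n$ are checked by the explicit computation above.
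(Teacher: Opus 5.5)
Your proof is correct and follows the paper's skeleton: the basic spin character, Lemma~\ref{lem:An2part} to bound $v_2(r)$, (6.2) for the orbit length, and a single exception at $n=11$. The one genuine difference is the character you use at $n=11$ when $8\mid r$. The paper takes a constituent of the split spin character labelled $(8,2,1)$, of degree $616$ with orbit length at most $2$, and checks $8\cdot2\cdot616=9856\mid|A_{11}|$. You take the spin character labelled $(10,1)$. Because $11-2$ is odd, it is the irreducible restriction of a spin character of the double cover of $S_{11}$, so it is invariant under all of $\Aut(Q)$ and $c=1$. Its degree is $144$, and $8\cdot144=1152=2^7\cdot3^2$ divides $|A_{11}|=2^7\cdot3^4\cdot5^2\cdot7\cdot11$. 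This is slightly more economical than the paper's choice, since the orbit factor disappears.

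Your Legendre/digit-sum bookkeeping replaces the paper's split into $n=2m$ and $n=2m+1$ and is equally valid, but three details need tightening.
\begin{itemize}
\item The crude estimate $s_2(n)\le\lfloor\log_2 n\rfloor+1$ still fails at $n=17$ and $n=19$; it holds uniformly only from $n=20$. Your direct check must therefore run through $n=19$, where the original inequality reads $11\le14$ and $13\le15$.
\item Your odd-part remark about the $(n-1,1)$ character is not a valid general argument, since what is needed is $r_{2'}(n-2)_{2'}\mid|A_n|_{2'}$. It is harmless, though, because it is only used at $n=11$. There the cycle type $(8)(2)(1)$ forces $r=8$, so the odd part is just $9\mid3^4$.
\item You should state explicitly that for $n=11$ and $v_2(r)\le2$ the basic spin character already works, since $2+1+4=7\le7$.
\end{itemize}
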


\begin{proof}
Let $r=o(qC)$ and let $c$ denote the orbit length of the character chosen below. Both $c$ and the basic spin degree are powers of $2$. Hence the odd part of $r c\beta_n(1)$ is just $r_{2'}$, which divides $|A_n|_{2'}$ by Lagrange's theorem. It remains to check the $2$-part.

For $n$ even the orbit factor in (6.2) is $1$, while for $n$ odd it is at most $2$. By Lemma~\ref{lem:An2part}, the basic spin character works whenever
\begin{equation}
 \lfloor\log_2(n-2)\rfloor+
 \left\lfloor\frac{n-2}{2}\right\rfloor+
 \epsilon_n
 \le v_2(|A_n|),
\tag{6.5}
\end{equation}
where $\epsilon_n=0$ for even $n$ and $\epsilon_n=1$ for odd $n$.

We verify (6.5) explicitly. If $n=2m$, then
\[
 v_2(|A_n|)=m+v_2(m!)-1,
\]
so (6.5) is equivalent to
\begin{equation}
 \lfloor\log_2(2m-2)\rfloor\le v_2(m!).
\tag{6.6}
\end{equation}
For $m=5,6,7$ this is immediate. If $m\ge8$, then
\[
 \lfloor\log_2(2m-2)\rfloor\le \left\lfloor\frac m2\right\rfloor\le v_2(m!),
\]
so (6.6) holds.

If $n=2m+1$, then again
\[
 v_2(|A_n|)=m+v_2(m!)-1,
\]
and (6.5) becomes
\begin{equation}
 \lfloor\log_2(2m-1)\rfloor+1\le v_2(m!).
\tag{6.7}
\end{equation}
For $m=4,6,7$ this is immediate, while it fails for $m=5$, that is, for $n=11$. If $m\ge8$, then
\[
 v_2(m!)\ge \left\lfloor\frac m2\right\rfloor+
                  \left\lfloor\frac m4\right\rfloor.
\]
Moreover
\[
 \left\lfloor\frac m2\right\rfloor+
 \left\lfloor\frac m4\right\rfloor
 \ge \frac{3m}{4}-2\ge \log_2(2m-1).
\]
The last inequality holds at $m=8$ and its left side minus the right side is increasing for $m\ge8$. Since $2m-1>1$ is odd and hence is not a power of $2$, the left side, being an integer, is at least $\lfloor\log_2(2m-1)\rfloor+1$. Thus (6.7) holds for all $m\ge8$. Consequently the basic spin character proves (3.8) for every $n\ge9$ except $n=11$.

It remains to consider $n=11$. Here $\beta_{11}(1)=16$ and $v_2(|A_{11}|)=7$. Thus the basic spin character still works if the $2$-part of $r$ is at most $4$. Suppose therefore that $8\mid r$. An element of $A_{11}$ whose order has $2$-part $8$ contains an $8$-cycle. Since an $8$-cycle is odd and the whole permutation is even, there must be another even cycle. Only three letters remain, so the cycle type is necessarily
\[
 (8)(2)(1).
\]
In particular, $r=8$.

Take a spin character of a double cover of $S_{11}$ labelled by the strict partition $(8,2,1)$. The spin degree formula of \cite[Section~2.2]{BNOT2015} gives
\[
 2^{\lfloor(11-3)/2\rfloor}\cdot
 \frac{11!}{8!2!}\cdot
 \frac{8-2}{8+2}\cdot
 \frac{8-1}{8+1}\cdot
 \frac{2-1}{2+1}
 =1232.
\]
Since $11-3$ is even, its restriction to $2.A_{11}$ is the sum of two irreducible spin characters of degree
\[
 616=2^3\cdot7\cdot11;
\]
see \cite[Section~2.2]{BNOT2015}. By Clifford theory these two constituents are interchanged by an element of the double cover of $S_{11}$ outside $2.A_{11}$. Hence their orbit under the nontrivial outer automorphism has length $2$; if $a$ is inner, the orbit length is $1$. In either case it is at most $2$. Finally,
\[
 8\cdot2\cdot616=9856\mid |A_{11}|,
\qquad
 \frac{|A_{11}|}{9856}=2025.
\]
Thus (3.8) also holds for $n=11$.
\end{proof}

\section{Groups of Lie type}

We prove the one-component condition for the ordinary algebraic covers of finite groups of Lie type. We use regular embeddings for the degree calculation and multiplicity-free restriction.  For a regular embedding
\[
 \mathbf G\hookrightarrow \widetilde{\mathbf G}
\]
compatible with a Steinberg map $F$, the quotient
$\widetilde{\mathbf G}^{F}/\mathbf G^{F}$ is abelian of order prime to the defining characteristic, and restriction of irreducible characters from $\widetilde{\mathbf G}^{F}$ to $\mathbf G^{F}$ is multiplicity-free; see \cite[Theorem~1.7.15]{GeckMalle2020}.  We also use the regular-embedding parametrization of semisimple characters in \cite[Corollary~2.6.18]{GeckMalle2020} and the connected-center degree formula in \cite[Theorem~2.6.11(b)]{GeckMalle2020}.  Automorphism equivariance is taken separately from Sp\"ath.  Her \cite[Theorem~B]{Spath2025} gives an $\Out$-equivariant Jordan decomposition for the universal covering group.  More precisely, \cite[Proposition~3.11(a)]{Spath2025} describes the actions of diagonal and graph--field automorphisms on Jordan parameters, while \cite[Proposition~3.17(a) and Corollary~3.18(a)]{Spath2025} gives the graph--field equivariance of the orbit-sums obtained by restricting characters from a regular embedding.

We first isolate a finite-group observation.

\begin{lemma}\label{lem:clifford-torsor}
Let $N\lhd X$, with $X/N$ abelian, and let $\widetilde\chi\in\Irr(X)$.  Suppose that $\widetilde\chi_N$ is multiplicity-free, and let $\Omega$ be its set of irreducible constituents.  Then $\Omega$ is a torsor for an abelian group of order $|\Omega|$.  If a cyclic group $A$ acts by automorphisms on $X$, stabilizes $N$, and stabilizes $\Omega$, then some $\chi\in\Omega$ satisfies
\begin{equation}
 |A:\Stab_A(\chi)|\mid |\Omega|.
\tag{7.1}
\end{equation}
\end{lemma}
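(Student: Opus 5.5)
The plan is to realize $\Omega$ as a torsor under a group of linear characters of $X/N$, and then apply Lemma~\ref{lem:torsor}.

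Let $T=I_X(\chi_0)$ for some $\chi_0\in\Omega$. Since $X/N$ is abelian, $T\lhd X$. By Clifford theory, $\widetilde\chi_N=e\sum_{\chi\in\Omega}\chi$ with $e=1$, and $\Omega$ is a single $X$-orbit of size $|X:T|$. Because $X/N$ is abelian, the stabilizer $T$ is the same for every $\chi\in\Omega$. The group $X/T$ therefore acts regularly on $\Omega$, since the action is transitive and all point stabilizers equal $T/T$. So $\Omega$ is an $X/T$-torsor of order $|\Omega|$, which proves the first assertion.

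For the second assertion I want the torsor structure to be compatible with $A$ in the sense of Lemma~\ref{lem:torsor}. The action of $X/T$ on $\Omega$ is by conjugation, and $A$ acts on $X$ stabilizing $N$. First I check that $A$ stabilizes $T$. Any $b\in A$ permutes $\Omega$, and $T$ is the common stabilizer in $X$ of the members of $\Omega$, so $T^b$ is the stabilizer of $\chi_0^b\in\Omega$, and hence $T^b=T$. Thus $A$ acts on the abelian group $Y:=X/T$.

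Next I verify compatibility. For $y\in X$, $\chi\in\Omega$ and $b\in A$,
\[
(\chi^{y})^{b}=(\chi^{b})^{y^{b}},
\]
because $\chi^y(n)=\chi(yny^{-1})$ and applying $b$ transports both the conjugation and the character. This is exactly the relation $(\eta e)^b=\eta^b e^b$ with $\eta=yT\in Y$ and $e=\chi$. Lemma~\ref{lem:torsor} with $X$ there taken to be $Y$ then gives $\chi\in\Omega$ with $|A:\Stab_A(\chi)|$ dividing $|Y|=|\Omega|$, which is (7.1).

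The only point that needs care is the compatibility, together with the fact that $A$ preserves $T$. The hypothesis that $A$ stabilizes $\Omega$ handles the latter. If a hidden issue appears, it will be in the variance convention: Lemma~\ref{lem:torsor} writes the torsor action on the left, so I convert the right conjugation action by using $\eta\cdot\chi:=\chi^{y^{-1}}$. This is harmless since $Y$ is abelian.
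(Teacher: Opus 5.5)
Your proposal is correct and is essentially the paper's own proof. Both take the common inertia group $T=I_X(\chi_0)$, which is normal because $X/N$ is abelian. Both note that $A$ stabilizes $T$ because $A$ permutes $\Omega$, and both apply Lemma~\ref{lem:torsor} to $\Omega$ as an $A$-equivariant $X/T$-torsor. Your explicit checks of $T^b=T$ and of $(\chi^y)^b=(\chi^b)^{y^b}$ just spell out what the paper asserts in a single line.
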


\begin{proof}
By Clifford theory, $X$ acts transitively on $\Omega$.  If $I$ is the inertia group in $X$ of one constituent, then $N\le I$ and $X/N$ is abelian, so $I\lhd X$.  Hence every constituent has the same inertia group $I$.  Since $A$ stabilizes $\Omega$, it follows that $A$ stabilizes $I$.  Thus $X/I$ is abelian, has order $|\Omega|$, and acts regularly on $\Omega$; moreover the action of $A$ on $\Omega$ is compatible with its induced action on $X/I$.  Therefore $\Omega$ is an $A$-equivariant $X/I$-torsor, and Lemma~\ref{lem:torsor} applies.
\end{proof}

Let $\mathbf G$ be simple and simply connected, let $F$ be a Steinberg endomorphism, and put
\[
 \widehat Q=\mathbf G^F,\qquad \widehat C=Z(\widehat Q).
\]
We assume that $\widehat Q$ is perfect.  The few cases where this fails are among the usual small exceptional groups and will be left with the finite residue.  Let $K\le\widehat C$ be invariant under the automorphism under consideration and set
\[
 Q=\widehat Q/K.
\]
Thus $Q$ is an ordinary algebraic cover, or a central quotient of one.  Write $C=Z(Q)$. Outside the exceptional Schur-multiplier cases separated in Section~8, $\widehat Q$ is the universal central extension of $Q/C$; compare the non-generic multiplier list in \cite[Table~24.3]{MalleTesterman2011}. We shall use the following elementary consequence of universality.

\begin{lemma}\label{lem:lift-central-quotient}
Let $U$ be the universal central extension of a nonabelian simple group $S$, let $K\le Z(U)$, and put $Q=U/K$. Then $Z(Q)=Z(U)/K$, and every automorphism of $Q$ lifts to an automorphism of $U$ normalizing $K$.
\end{lemma}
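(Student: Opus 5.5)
We prove the two assertions in turn, using only the defining properties of a universal central extension: $U$ is perfect, and for every central extension $E\to S$ with $E$ perfect there is a unique homomorphism $U\to E$ over $S$. Let $\pi:U\to Q=U/K$ and $\rho:Q\to S$ be the natural maps. Then $\rho\pi:U\to S$ is the universal central extension, with kernel $Z(U)$.

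For the center, note first that $Z(U)/K\le Z(Q)$, since central elements stay central in a quotient. Conversely, $\rho(Z(Q))\le Z(S)=1$ because $\rho$ is surjective and $S$ is nonabelian simple. Hence $Z(Q)\le\ker\rho=Z(U)/K$, and $Z(Q)=Z(U)/K$.

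For lifting, let $\varphi\in\Aut(Q)$. Since $\varphi$ preserves $Z(Q)$, it induces $\bar\varphi\in\Aut(S)$. Consider the composite $\pi_\varphi:=\varphi\circ\pi:U\to Q$. It is a surjection whose kernel $K$ is central, so $(U,\pi_\varphi)$ is a central extension of $Q$. Composing with $\rho$ gives a central extension of $S$: its kernel $\pi_\varphi^{-1}(Z(Q))=Z(U)$ is central in $U$. Thus $\bar\varphi^{-1}\circ\rho\circ\varphi\circ\pi:U\to S$ is a central extension with $U$ perfect, and it equals $\rho\pi$ up to the automorphism $\bar\varphi$. More directly, $\rho\circ\varphi\circ\pi=\bar\varphi\circ\rho\circ\pi$.

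Now apply universality twice. The pair $(U,\ \bar\varphi\circ\rho\pi)$ is a central extension of $S$ with $U$ perfect. Hence there is a unique homomorphism $\tilde\varphi:U\to U$ with $\rho\pi\circ\tilde\varphi=\bar\varphi\circ\rho\pi$. Similarly, for $\bar\varphi^{-1}$ we get a map $\psi$. Then $\psi\tilde\varphi$ and $\tilde\varphi\psi$ are endomorphisms of $U$ over the identity of $S$, so by uniqueness they equal the identity. Hence $\tilde\varphi\in\Aut(U)$.

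It remains to check that $\tilde\varphi$ normalizes $K$ and induces $\varphi$. Both $\pi\circ\tilde\varphi$ and $\varphi\circ\pi$ are homomorphisms $U\to Q$ lying over $\bar\varphi\circ\rho\pi$. Their "quotient" $u\mapsto(\pi\tilde\varphi(u))^{-1}\varphi\pi(u)$ therefore takes values in $Z(Q)$. Since $Z(Q)$ is central, this map is a homomorphism $U\to Z(Q)$. Its image is abelian while $U$ is perfect, so the map is trivial. Thus $\pi\circ\tilde\varphi=\varphi\circ\pi$. Taking kernels gives $\tilde\varphi^{-1}(K)=K$, so $\tilde\varphi(K)=K$, and $\tilde\varphi$ induces $\varphi$ on $Q$.

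The only delicate point is the uniqueness of the comparison map. This uniqueness is why both $\tilde\varphi$ being an automorphism and the identity $\pi\tilde\varphi=\varphi\pi$ hold. Both rest on the same argument: the perfectness of $U$ kills any homomorphism from $U$ into a central, hence abelian, subgroup.
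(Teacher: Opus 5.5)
Your proof is correct and follows the paper's argument for the lifting: you lift $\bar\varphi$ by universality, use uniqueness to see that $\tilde\varphi$ is invertible, and kill the pointwise quotient $U\to Z(Q)$ by perfectness to get $\pi\tilde\varphi=\varphi\pi$. For the center you use $\rho(Z(Q))\le Z(S)=1$ instead of the paper's commutator homomorphism $v\mapsto[u,v]$, which is equally valid and slightly more direct.

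One small slip: to obtain $\rho\pi\circ\tilde\varphi=\bar\varphi\circ\rho\pi$ you should apply universality to the extension $(U,\bar\varphi^{-1}\circ\rho\pi)$, not $(U,\bar\varphi\circ\rho\pi)$. This is harmless, since you treat both $\bar\varphi$ and $\bar\varphi^{-1}$ anyway.
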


\begin{proof}
Let $\pi:U\to Q$ be the quotient map. If $uK\in Z(Q)$, then $[u,U]\le K\le Z(U)$. Hence $v\mapsto [u,v]$ is a homomorphism from the perfect group $U$ to the abelian group $K$, and is therefore trivial. Thus $u\in Z(U)$, proving $Z(Q)=Z(U)/K$ and hence $Q/Z(Q)\cong S$.

Let $a\in\Aut(Q)$, and let $\bar a$ be the induced automorphism of $S$. By the universal property, $\bar a$ lifts uniquely to a homomorphism $\widetilde a:U\to U$. Applying the same construction to $\bar a^{-1}$ and using uniqueness shows that $\widetilde a$ is an automorphism. Now $\pi\widetilde a$ and $a\pi$ induce the same map from $U$ to $S$. Their pointwise quotient therefore defines a homomorphism $U\to Z(Q)$, which is trivial because $U$ is perfect. Hence
\[
 \pi\widetilde a=a\pi.
\]
Taking kernels gives $\widetilde a(K)=K$.
\end{proof}

\begin{proposition}\label{prop:Lie}
Suppose that $a\in\Aut(Q)$ is induced by a standard inner-diagonal, field, graph, or graph-field automorphism and that it lifts to an automorphism $\widehat a$ of $\widehat Q$.  Let $\nu\in\Irr(C)$ be faithful and $a$-invariant.  If $q\in Q$ is fixed by $a$ and
\[
 r=o(qC),
\]
then there is $\chi\in\Irr(Q\mid\nu)$ such that, with
\[
 c=|\chi^{\langle a\rangle}|,
\]
one has
\begin{equation}
 rc\chi(1)\mid |Q/C|.
\tag{7.2}
\end{equation}
\end{proposition}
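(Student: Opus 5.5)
We plan to take $\chi$ to be a semisimple character of $Q$ lying over $\nu$, built from a regular embedding, and to split the divisibility (7.2) prime by prime into the defining characteristic $p$ and the primes $\ell\ne p$.

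\textbf{Setup.} Lift everything to $\widehat Q=\mathbf G^F$ and fix a regular embedding $\mathbf G\hookrightarrow\widetilde{\mathbf G}$. Put $\widetilde Q=\widetilde{\mathbf G}^F$. The $a$-invariant faithful character $\nu$ of $C=\widehat C/K$ inflates to a character $\widehat\nu$ of $\widehat C$ with kernel $K$. Through the duality between $Z(\widehat Q)$ and $Z(\mathbf G^*)^{F}$-type data, $\widehat\nu$ corresponds to a central element $z$ of the dual group $\mathbf G^{*F}$.

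\textbf{Choice of $\chi$.} Choose a semisimple element $\widetilde s\in\widetilde{\mathbf G}^{*F}$ that maps to $z$ (up to the appropriate identification). Let $\widetilde\chi=\chi_{\widetilde s}$ be the corresponding semisimple character of $\widetilde Q$. By \cite[Theorem~2.6.11(b)]{GeckMalle2020} its degree is
\[
 \widetilde\chi(1)=|\widetilde{\mathbf G}^{*F}:C_{\widetilde{\mathbf G}^*}(\widetilde s)^F|_{p'}.
\]
In the simplest case $\widetilde s$ is central, so $\widetilde\chi$ is linear and its restriction has degree $1$. When $\widehat\nu$ has no linear extension, we take instead $\widetilde s$ with $C_{\mathbf G^*}(s)$ a maximal-rank Levi subgroup. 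By \cite[Theorem~1.7.15]{GeckMalle2020} the restriction $\widetilde\chi_{\widehat Q}$ is multiplicity-free, and we let $\Omega$ be its set of constituents.

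We arrange $\widetilde s$ so that its $\widetilde Q^*$-class is stable under the graph--field part of $a$; this is possible because $z$ is $a$-invariant. Then \cite[Proposition~3.17(a), Corollary~3.18(a)]{Spath2025} show that $\Omega$ is $\langle\widehat a\rangle$-stable. Lemma~\ref{lem:clifford-torsor} then yields $\chi\in\Omega$ with $c\mid|\Omega|$. Moreover $|\Omega|$ divides $|\widetilde Q:\widehat Q Z(\widetilde Q)|$, which divides $|\widehat C|$, and in particular is prime to $p$.

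\textbf{Prime $p$.} Both $\chi(1)$ and $c$ are prime to $p$, so only $r_p$ matters. Since $r_p$ is the $p$-part of the order of an element of $Q/C$, and $\exp$ of a Sylow $p$-subgroup divides $|Q/C|_p$, Lagrange's theorem gives $r_p\mid|Q/C|_p$.

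\textbf{Primes $\ell\ne p$.} We need
\[
 v_\ell(r)+v_\ell(c)+v_\ell(\chi(1))\le v_\ell(|Q/C|).
\]
Here $v_\ell(\chi(1))$ is computed by the centralizer index above. With $\widetilde s$ chosen inside the centre of a Levi subgroup $\mathbf L^*$, $\chi(1)$ is the $p'$-part of $|\mathbf G^F:\mathbf L^F|$. The element $qC$, being $a$-fixed, lies in some maximal torus, so $r_\ell$ is bounded by the $\ell$-part of the exponent of a torus. The order formula, a product of cyclotomic values $\Phi_e(q)$, then leaves room for $r_\ell$ and $c_\ell\le|\widehat C|_\ell$ provided the Levi subgroup is small enough. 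When $\widehat\nu$ is trivial we may simply take $\chi$ to be a linear character, giving degree $1$ and $c=1$, and the claim reduces to Lagrange's theorem.

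\textbf{Main obstacle.} The hard step is the $\ell$-adic comparison for $\ell\mid|\widehat C|$, that is, the simultaneous control of $r_\ell$, the orbit factor $c_\ell$ and $\chi(1)_\ell$. This is delicate in type $A$ with $\ell\mid(n,q-\epsilon)$, where the centre, the torus exponent and the degree all draw on the same $\ell$-part of $|Q/C|$. There I would first try to choose $\widetilde s$ with $C_{\mathbf G^*}(s)$ a Coxeter-type torus normalizer, since $\ell$-elements of $Q/C$ of large order also centralize a cyclic torus. Failing that, I would bound the $\ell$-parts explicitly using the order formula and fall back to direct checking in small rank.
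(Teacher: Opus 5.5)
There is a genuine gap, and it is not just unfinished $\ell$-adic bookkeeping. Your $\chi$ is chosen from $\nu$ (and the type of $a$) alone. The element $q$ enters only through a uniform bound of $r_\ell$ by a torus exponent, and no such $q$-independent choice can succeed.

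Take $Q=\widehat Q=\mathrm{SL}_2(5)$, so $|Q/C|=60$. Then $\Irr(Q\mid\nu)$ has degrees $2,2,4,6$, and the outer diagonal automorphism swaps the two characters of degree $2$. The element $q=\mathrm{diag}(2,3)$ has $r=2$ and is centralized by $\mathrm{diag}(2,1)$, whose determinant is a nonsquare. An element of order $3$ lies in a nonsplit torus $\mathbb F_{25}^\times$, on which the determinant (the norm) maps onto $\mathbb F_5^\times$, so it too is fixed by an outer diagonal automorphism. For $r=2$ and $a$ outer, only the degree-$6$ character satisfies (7.2), since $2\cdot2\cdot2$ and $2\cdot4$ do not divide $60$. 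For $r=3$ the degree-$6$ character fails, since $18\nmid 60$. So the character must be adapted to the torus containing the semisimple part of $q$; no ``small enough Levi'' or Coxeter-type choice repairs this.

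Your setup also misidentifies the input. For simply connected $\mathbf G$ the dual $\mathbf G^*$ is adjoint, so $\widehat\nu$ is not given by a central element of $\mathbf G^{*F}$. Since $\widehat Q$ is perfect, a linear $\widetilde\chi$ lies only over the trivial central character, so your ``simplest case'' is just $C=1$. Finally, the $\widehat a$-stability of $\Omega$ is asserted rather than proved.

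The missing idea is the one the paper uses.
\begin{itemize}
\item[(i)] Write a lift of $q$ as $su$ with $s$ semisimple. Since $\widehat a(s)=sk$ with $k\in K$, after an inner correction from $C_{\mathbf G}(s)^F$ there is an $\widehat a$-stable, $F$-stable maximal torus $\mathbf T$ of $\mathbf G$ containing $s$.
\item[(ii)] Put $H=\widehat C\langle s\rangle$, so $|H:\widehat C|=r_{p'}$. Any extension $\mu$ of $\widehat\nu$ to $H$ is $\widehat a$-invariant, because $K\le\ker\widehat\nu$.
\item[(iii)] Lemma~\ref{lem:torsor} gives $\lambda\in\Irr(T\mid\mu)$ with orbit length $d\mid |T:H|$, hence $r_{p'}d\mid |T:\widehat C|$.
\item[(iv)] The semisimple character $\widetilde\rho$ of $\widetilde Q$ attached to $(\mathbf T,\lambda)$ has degree dividing $|\widehat Q:T|_{p'}$, and its $e$ restriction constituents lie over $\widehat\nu$.
\item[(v)] Rather than requiring $\Omega$ to be $\widehat a$-stable, the paper bounds the index of its set-stabilizer by $d$ and the residual orbit by $e$, so $c\mid de$.
\end{itemize}
Then $r_{p'}c\chi(1)\mid r_{p'}d\,\widetilde\rho(1)\mid |T:\widehat C|\,|\widehat Q:T|_{p'}=|Q:C|_{p'}$. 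The element order is absorbed by the torus and the degree by its index, with no cyclotomic case analysis. In the example above this produces the degree-$6$ character from the split torus when $r=2$, and a character of degree $4$ or $2$ from the nonsplit torus when $r=3$. Your treatment of the prime $p$ is fine.
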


\begin{proof}
Inflate $\nu$ to a character $\widehat\nu\in\Irr(\widehat C)$; its kernel contains $K$.  Choose a lift $\widehat q\in\widehat Q$ of $q$, and write its Jordan decomposition as
\[
 \widehat q=su=us,
\]
where $s$ is semisimple and $u$ is unipotent.  Since $q$ is fixed by $a$,
\[
 \widehat q^{\widehat a}=\widehat q k
\]
for some $k\in K$.  As $k$ is central and has order prime to the defining characteristic $p$, uniqueness of Jordan decomposition gives
\begin{equation}
 s^{\widehat a}=sk,\qquad u^{\widehat a}=u.
\tag{7.3}
\end{equation}
In particular, $\widehat a$ stabilizes $C_{\mathbf G}(s)^\circ$.

Since $\mathbf G$ is simply connected, $C_{\mathbf G}(s)$ is connected by \cite[Theorem~14.16(a)]{MalleTesterman2011}. Choose an $F$-stable maximal torus $\mathbf T$ contained in an $F$-stable Borel subgroup of $C_{\mathbf G}(s)$. Such pairs exist, and all such pairs are conjugate under $C_{\mathbf G}(s)^F$ by \cite[Corollary~21.12]{MalleTesterman2011}.  Since $\widehat a$ sends an $F$-stable Borel--torus pair of $C_{\mathbf G}(s)$ to another one, we may multiply $\widehat a$ by an inner automorphism induced by an element of $C_{\mathbf G}(s)^F$ and assume
\begin{equation}
 \widehat a(\mathbf T)=\mathbf T.
\tag{7.4}
\end{equation}
This inner correction does not change the action on $\Irr(\widehat Q)$ or on $\Irr(Q)$. Moreover $s\in\mathbf T$ by \cite[Proposition~14.1]{MalleTesterman2011}.  Put
\[
 T=\mathbf T^F,\qquad H=\widehat C\langle s\rangle.
\]
If $r_{p'}$ denotes the $p'$-part of $r$, then
\begin{equation}
 |H:\widehat C|=r_{p'}.
\tag{7.5}
\end{equation}
Indeed, $\widehat Q/\widehat C\cong Q/C$, and the image of $s$ is the semisimple part of $qC$.

Choose any extension $\mu\in\Irr(H\mid\widehat\nu)$.  It is $\widehat a$-invariant.  On $\widehat C$ this follows from the $a$-invariance of $\nu$, while (7.3) gives
\[
 \mu(s^{\widehat a})=\mu(sk)=\mu(s)\widehat\nu(k)=\mu(s).
\]
The set $\Irr(T\mid\mu)$ is an $\Irr(T/H)$-torsor, equivariant under $\langle\widehat a\rangle$.  By Lemma~\ref{lem:torsor}, choose
$\lambda\in\Irr(T\mid\mu)$ such that, if
\[
 d=|\lambda^{\langle\widehat a\rangle}|,
\]
then
\begin{equation}
 d\mid |T:H|.
\tag{7.6}
\end{equation}
Combining (7.5) and (7.6),
\begin{equation}
 r_{p'}d\mid |T:\widehat C|.
\tag{7.7}
\end{equation}

Let $(\mathbf T^*,t)$ be dual to $(\mathbf T,\lambda)$.  Choose a regular embedding
\[
 \mathbf G\hookrightarrow\widetilde{\mathbf G}
\]
for which the standard graph--field automorphisms extend to $\widetilde{\mathbf G}$, as in the setup of \cite[Section~2.2]{Spath2025}; such a regular embedding may be chosen by \cite[Proposition~1.7.5]{GeckMalle2020}.  Thus diagonal automorphisms are induced by conjugation in $\widetilde{\mathbf G}^{F}$ and the graph--field part acts on $\widetilde{\mathbf G}^{F}$, exactly as required for the equivariance in \cite[Proposition~3.11(a), Proposition~3.17(a), and Corollary~3.18(a)]{Spath2025}. Put
\[
 \widetilde Q=\widetilde{\mathbf G}^{F},\qquad
 \widetilde T=(\mathbf T Z(\widetilde{\mathbf G}))^F.
\]
Let $\widetilde t$ be a lift of $t$ to the dual group of $\widetilde{\mathbf G}$.  Since $Z(\widetilde{\mathbf G})$ is connected, the derived group of the dual group is simply connected, and hence $C_{\widetilde{\mathbf G}^*}(\widetilde t)$ is connected.  The rational Lusztig series of $\widetilde Q$ labelled by $\widetilde t$ therefore contains a unique semisimple character $\widetilde\rho$ by \cite[Theorem~2.6.11]{GeckMalle2020}.  Its degree is
\begin{equation}
 \widetilde\rho(1)
 =|\widetilde Q^*:C_{\widetilde Q^*}(\widetilde t)|_{p'},
\tag{7.8}
\end{equation}
by \cite[Theorem~2.6.11(b)]{GeckMalle2020}.
Because $\widetilde T^*\le C_{\widetilde Q^*}(\widetilde t)$,
\begin{equation}
 \widetilde\rho(1)\mid |\widetilde Q:\widetilde T|_{p'}
 =|\widehat Q:T|_{p'}.
\tag{7.9}
\end{equation}
The last equality is exactly the index consequence of \cite[Lemma~1.7.7]{GeckMalle2020}: for $\widetilde T=TZ(\widetilde{\mathbf G})^F$ one has $\widetilde Q=\widehat Q\widetilde T$ and $\widehat Q\cap\widetilde T=T$.

By the multiplicity-free restriction theorem for regular embeddings \cite[Theorem~1.7.15]{GeckMalle2020},
\begin{equation}
 \widetilde\rho_{\widehat Q}=\rho_1+\cdots+\rho_e
\tag{7.10}
\end{equation}
with distinct irreducible constituents of equal degree.  Hence
\begin{equation}
 e\rho_i(1)=\widetilde\rho(1)
 \qquad(1\le i\le e).
\tag{7.11}
\end{equation}
All constituents in (7.10) have central character $\widehat\nu$.  One way to see this is that they lie in the geometric Lusztig series determined by $\lambda$, and characters in one geometric Lusztig series have the same central character; see \cite[Lemma~2.2]{Malle2007}.  Consequently $K\le\ker\rho_i$ for all $i$, so every $\rho_i$ descends to a character of $Q$ lying over $\nu$.

Let $A_0$ be the stabilizer in $\langle\widehat a\rangle$ of the set
\[
 \Omega=\{\rho_1,\ldots,\rho_e\}.
\]
Let $b$ be a power of $\widehat a$ fixing $\lambda$. Since $\mathbf T$ is $\widehat a$-stable, equivariance of duality shows that $b$ fixes the rational semisimple parameter determined by $(\mathbf T,\lambda)$.  We now use equivariance only after restriction to $\widehat Q$.  In the compatible regular embedding, \cite[Proposition~3.17(a) and Corollary~3.18(a)]{Spath2025} gives an $E(\widehat Q)$-equivariant parametrization of the restriction orbit-sums; in particular graph--field automorphisms act equivariantly on them. Diagonal automorphisms are induced by conjugation in $\widetilde Q$ and therefore permute the constituents of a restriction.  Hence the restriction orbit-sum corresponding to the fixed parameter is $b$-invariant.  Its irreducible constituents are precisely the members of $\Omega$, so $b$ stabilizes $\Omega$.  Thus every power of $\widehat a$ fixing $\lambda$ lies in $A_0$, and therefore
\begin{equation}
 |\langle\widehat a\rangle:A_0|\mid d.
\tag{7.12}
\end{equation}
By Lemma~\ref{lem:clifford-torsor}, some $\rho\in\Omega$ satisfies
\begin{equation}
 |A_0:\Stab_{A_0}(\rho)|\mid e.
\tag{7.13}
\end{equation}
Let $c$ be the orbit length of the corresponding descended character $\chi\in\Irr(Q\mid\nu)$ under $\langle a\rangle$.  Inflation is equivariant, so the same $c$ is the orbit length of $\rho$ under $\langle\widehat a\rangle$.  From (7.12) and (7.13),
\begin{equation}
 c\mid de.
\tag{7.14}
\end{equation}
Using (7.9), (7.11), and (7.14),
\begin{equation}
 c\chi(1)=c\rho(1)
 \mid d e\rho(1)
 =d\widetilde\rho(1)
 \mid d|\widehat Q:T|_{p'}.
\tag{7.15}
\end{equation}
Multiplying (7.15) by (7.7) gives
\begin{equation}
 r_{p'}c\chi(1)\mid |\widehat Q:\widehat C|_{p'}
 =|Q:C|_{p'}.
\tag{7.16}
\end{equation}
Finally, $d$ and $e$ are $p'$-numbers: $d$ divides the order of the torus quotient $T/H$, while $e$ divides the $p'$-group $\widetilde Q/\widehat Q$.  Thus $c\chi(1)$ is a $p'$-number.  Since the $p$-part $r_p$ of $r$ divides $|Q:C|_p$, multiplying (7.16) by $r_p$ proves (7.2).
\end{proof}

\begin{remark}
The description of automorphisms of finite groups of Lie type as products of inner-diagonal, field and graph automorphisms is given in \cite[\S24.2]{MalleTesterman2011}. Outside the exceptional multiplier cases separated in Section~8, the simply connected group $\widehat Q$ is the universal covering group, so Lemma~\ref{lem:lift-central-quotient} lifts every automorphism of a central quotient $Q$ to $\widehat Q$.  At the universal-cover level, Sp\"ath's \cite[Theorem~B, Proposition~3.11(a), Proposition~3.17(a), and Corollary~3.18(a)]{Spath2025} gives exactly the equivariance used in (7.12): graph--field automorphisms act equivariantly on the restriction orbit-sums, while diagonal automorphisms permute their constituents.  The regular embedding in the proof is otherwise used only for the degree calculation and multiplicity-free restriction in (7.8)--(7.11). Thus Proposition~\ref{prop:Lie} covers every admissible automorphism in the nonexceptional cases.
\end{remark}

\begin{remark}
Disconnected centralizers in the dual group cause no additional infinite-family case.  They appear downstairs through the splitting in (7.10).  Equations (7.11)--(7.14) show that the splitting factor $e$ also bounds the additional orbit on the constituents. Thus no separate estimate for the component group is needed.
\end{remark}

\section{A finite criterion for exceptional covers}

Let $Q$ be quasisimple with cyclic center $C$, and let $\nu\in\Irr(C)$ be faithful. If $a\in\Aut(Q)_\nu$, then $\nu(a(c))=\nu(c)$ for every $c\in C$; faithfulness of $\nu$ gives $a(c)=c$. Define
\begin{equation}
 \Out_0(Q)=\{\bar a\in\Out(Q):a|_C=1\},\qquad
 e_0(Q)=\exp(\Out_0(Q)).
\tag{8.1}
\end{equation}

\begin{lemma}\label{lem:finite-test}
Let $S=Q/C$, let $r$ be an element order in $S$, and suppose there is $\alpha\in\Irr(Q\mid\nu)$ such that
\begin{equation}
 r\,e_0(Q)\,\alpha(1)\mid |S|.
\tag{8.2}
\end{equation}
Then (3.8) holds for every admissible cyclic automorphism group and every element of order $r$.
\end{lemma}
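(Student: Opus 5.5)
Fix an admissible cyclic group $\langle a\rangle\le\Aut(Q)$ preserving $\nu$, and an element $q$ with $o(qC)=r$ satisfying (3.7). We use the same $\alpha$ given by (8.2). The plan is to show that $c=|\alpha^{\langle a\rangle}|$ divides $e_0(Q)$. Granting this, (3.8) follows at once:
\[
 rc\,\alpha(1)\mid r\,e_0(Q)\,\alpha(1)\mid |S|.
\]

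To bound $c$, note first, as observed just before (8.1), that $a$ fixes $\nu$ and $\nu$ is faithful. Hence $a|_C=1$, and the image $\bar a$ of $a$ in $\Out(Q)$ lies in $\Out_0(Q)$. Consequently $\bar a^{\,e_0(Q)}=1$, that is, $a^{e_0(Q)}=\iota_u$ for some $u\in Q$. Inner automorphisms fix every irreducible character of $Q$, since $\alpha^{\iota_u}(v)=\alpha(u^{-1}vu)=\alpha(v)$. Therefore $a^{e_0(Q)}$ lies in $\Stab_{\langle a\rangle}(\alpha)$.

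The orbit length $c$ equals the index $|\langle a\rangle:\Stab_{\langle a\rangle}(\alpha)|$. In a cyclic group, this index is the smallest positive $m$ with $a^m$ in the stabilizer. Hence $c\mid e_0(Q)$, which completes the argument.

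Notice that the hypothesis $a^d=\iota_q$ and the condition $a(q)=q$ are never used. They are not needed, because the bound $c\mid e_0(Q)$ is uniform over all $\nu$-preserving automorphisms. Likewise, only the value $r$ enters, so the conclusion holds for every element of order $r$.

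The only point needing care is that $\Out_0(Q)$ is well defined. The condition $a|_C=1$ depends only on the class of $a$ modulo $\operatorname{Inn}(Q)$, because inner automorphisms centralize $C=Z(Q)$. Once this is checked, the argument is a direct consequence of the definitions, so no substantial obstacle is expected here. The real work is in verifying (8.2) for the twenty-four exceptional covers from their character tables.
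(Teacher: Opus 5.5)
Your proof is correct and is essentially the paper's argument. Since $a$ fixes the faithful character $\nu$, its image lies in $\Out_0(Q)$, so $a^{e_0(Q)}$ is inner and fixes $\alpha$; this gives $c\mid e_0(Q)$, and (3.8) then follows from (8.2). Your extra remarks are accurate: (3.7) is never used, and $\Out_0(Q)$ is well defined.
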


\begin{proof}
Inner automorphisms fix every irreducible character. Hence the orbit length of $\alpha$ under a cyclic subgroup of $\Aut(Q)_\nu$ is the order of a quotient of its image in $\Out_0(Q)$, and therefore divides $e_0(Q)$. Equation (8.2) implies (3.8).
\end{proof}

We shall sometimes replace $e_0(Q)$ by a larger integer. The following observation gives convenient uniform bounds.

\begin{lemma}\label{lem:outer-injection}
Let $Q$ be perfect and put $S=Q/Z(Q)$. Then the natural map
\[
 \Out(Q)\longrightarrow \Out(S)
\]
is injective. In particular, if $E$ is divisible by $\exp(\Out(S))$, then $e_0(Q)\mid E$.
\end{lemma}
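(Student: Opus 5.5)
The plan is to show directly that the kernel of the natural homomorphism $\Out(Q)\to\Out(S)$ is trivial. First I will check that this map is well defined. Since $Z(Q)$ is characteristic, every $a\in\Aut(Q)$ induces an automorphism $\bar a$ of $S=Q/Z(Q)$. Inner automorphisms of $Q$ induce inner automorphisms of $S$. Hence we get a homomorphism $\Aut(Q)\to\Aut(S)$ that descends to $\Out(Q)\to\Out(S)$.

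Now suppose $\bar a$ is inner on $S$, say conjugation by $uZ(Q)$. Replacing $a$ by $\iota_u^{-1}a$, we may assume $\bar a=1$. That means $a(v)=v\,z(v)$ with $z(v)\in Z(Q)$ for every $v\in Q$. The map $v\mapsto v^{-1}a(v)=z(v)$ is a homomorphism $Q\to Z(Q)$, because
\[
 z(vw)=w^{-1}v^{-1}a(v)a(w)=w^{-1}z(v)a(w)=z(v)z(w),
\]
using that $z(v)$ is central. Since $Q$ is perfect and $Z(Q)$ is abelian, this homomorphism is trivial. So $a=1$, and the original $a$ was inner, which proves injectivity. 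This is the same perfectness trick already used in the proof of Lemma~\ref{lem:lift-central-quotient}.

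For the consequence, $\Out_0(Q)$ is a subgroup of $\Out(Q)$, and by injectivity $\Out(Q)$ embeds in $\Out(S)$. Therefore $\exp(\Out_0(Q))$ divides $\exp(\Out(S))$, and this divides $E$. Hence $e_0(Q)\mid E$. The only point needing any care is the homomorphism computation above, where centrality of $z(v)$ is essential; I expect no real obstacle.
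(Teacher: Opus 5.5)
Your proposal is correct and follows the paper's proof: you compose with an inner automorphism so that it induces the identity on $S$, note that $v\mapsto v^{-1}a(v)$ is then a homomorphism $Q\to Z(Q)$, and conclude by perfectness that it is trivial. You also verify the homomorphism identity explicitly, which the paper leaves implicit, and the exponent bound follows, as in the paper, from $\Out_0(Q)\le\Out(Q)\hookrightarrow\Out(S)$.
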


\begin{proof}
An automorphism of $Q$ induces an automorphism of $S$. Suppose its image in $\Out(S)$ is trivial. After composing with an inner automorphism of $Q$, we may assume that it induces the identity on $S$. Then
\[
 q\longmapsto q^{-1}a(q)
\]
is a homomorphism from $Q$ to $Z(Q)$. Since $Q$ is perfect, this homomorphism is trivial, so $a=1$.
\end{proof}

If $C=1$, then Theorem~\ref{thm:QL} is immediate: take the principal character $\alpha=1_Q$.  Its orbit length is $1$, and $r\mid |Q|=|Q/C|$ by Lagrange's theorem.  Thus, in the remainder of this section, we may assume that $C\ne1$.

There is no need to treat the different faithful central characters separately. If $C=\langle z\rangle$ has order $m$ and $\nu_1,\nu_2\in\Irr(C)$ are faithful, then $\nu_2=\nu_1^k$ for some $(k,m)=1$. A Galois automorphism sending a primitive $m$th root of unity to its $k$th power sends $\Irr(Q\mid\nu_1)$ to $\Irr(Q\mid\nu_2)$ and preserves character degrees. Thus all faithful central characters have the same available degree set.

Also, an irreducible character lying over a faithful central character is faithful. Indeed, if $\alpha\in\Irr(Q\mid\nu)$ and $\ker\alpha\ne1$, then $\ker\alpha$ is a proper normal subgroup of the quasisimple group $Q$ and hence lies in $Z(Q)=C$. Faithfulness of $\nu$ forces $\ker\alpha\cap C=1$, a contradiction.

Some families are already covered by Navarro--Tiep. Their \cite[Proposition~3.4]{NavarroTiep2026} proves the stronger Condition~2.1 for all covers with sporadic simple quotient, for the covers of $A_5,\ldots,A_8$, and for $\SU_4(2)$. Their \cite[Theorem~3.9]{NavarroTiep2026} proves Condition~2.1 for every covering group with quotient $\PSL_2(q)$. These results imply Theorem~\ref{thm:QL} for those groups.

The non-generic Schur-multiplier list in \cite[Table~24.3]{MalleTesterman2011} contains sixteen simple groups. Removing the cases already covered by the alternating-group argument, the $\PSL_2$ cases, and $\SU_4(2)$ leaves eleven simple quotients. The ordinary algebraic covers and their standard central quotients have already been treated by Proposition~\ref{prop:Lie}. Since Theorem~\ref{thm:QL} only concerns quasisimple groups with cyclic center, the remaining exceptional covers are exactly the twenty-four cases in Table~\ref{tab:exceptional-residue}.

For each row of the table, $E$ is a positive integer with $e_0(Q)\mid E$. The last column gives a faithful character degree $d$ for which
\begin{equation}
 rEd\mid |S|
\tag{8.3}
\end{equation}
for every element order $r$ in $S$. When two degrees are displayed, the exceptional element orders are indicated explicitly. Thus Lemma~\ref{lem:finite-test} applies in every row. The element orders and faithful character degrees were read from the ordinary character tables in the \textsf{GAP} Character Table Library, Version~1.3.11 \cite{CTblLib2025}.

\begin{table}[ht]
\centering
\small
\begin{tabular}{@{}rllrl@{}}
\hline
case & $S$ & $Q$ & $E$ & faithful degree(s) $d$ \\
\hline
1  & $\PSL_3(4)$     & $2.S$       & 2 & $10$; $28$ if $r=5$ \\
2  & $\PSL_3(4)$     & $4_1.S$     & 2 & $8$ \\
3  & $\PSL_3(4)$     & $4_2.S$     & 2 & $20$; $28$ if $r=5$ \\
4  & $\PSL_3(4)$     & $6.S$       & 2 & $6$ \\
5  & $\PSL_3(4)$     & $12_1.S$    & 1 & $24$ \\
6  & $\PSL_3(4)$     & $12_2.S$    & 1 & $48$ \\
7  & $\PSU_4(3)$     & $3_1.S$     & 4 & $15$; $21$ if $r=5$ \\
8  & $\PSU_4(3)$     & $3_2.S$     & 4 & $36$ \\
9  & $\PSU_4(3)$     & $6_1.S$     & 4 & $6$ \\
10 & $\PSU_4(3)$     & $6_2.S$     & 4 & $126$; $90$ if $r=7$ \\
11 & $\PSU_4(3)$     & $12_1.S$    & 4 & $84$; $120$ if $r=7$ \\
12 & $\PSU_4(3)$     & $12_2.S$    & 4 & $36$ \\
13 & $\PSU_6(2)$     & $2.S$       & 6 & $56$; $176$ if $r=7$ \\
14 & $\PSU_6(2)$     & $6.S$       & 6 & $120$; $672$ if $r=5,10,15$ \\
15 & ${}^2B_2(8)$     & $2.S$       & 1 & $40$; $56$ if $r=5$ \\
16 & $G_2(3)$         & $3.S$       & 2 & $27$ \\
17 & $G_2(4)$         & $2.S$       & 2 & $12$ \\
18 & $\PSp_6(2)$      & $2.S$       & 1 & $8$ \\
19 & $\OmegaG_7(3)$   & $3.S$       & 2 & $27$ \\
20 & $\OmegaG_7(3)$   & $6.S$       & 2 & $4536$; $14040$ if $r=7,14$ \\
21 & $\OmegaG_8^+(2)$ & $2.S$       & 6 & $8$ \\
22 & $F_4(2)$         & $2.S$       & 2 & $52$; $2380$ if $r=13$ \\
23 & ${}^2E_6(2)$     & $2.S$       & 6 & $2432$; $45696$ if $r=19$ \\
24 & ${}^2E_6(2)$     & $6.S$       & 6 & $494208$; $22619520$ if $r=13$; \\
   &                   &             &   & $90419328$ if $r=11,22,33$ \\
\hline
\end{tabular}
\caption{The exceptional cyclic covers.}
\label{tab:exceptional-residue}
\end{table}

We explain the values of $E$. Except for the covers of $\PSL_3(4)$ and ${}^2B_2(8)$, the exponent of the full outer automorphism group of the simple quotient already gives the displayed value, by Lemma~\ref{lem:outer-injection}. Thus one may take $E=4$ for $\PSU_4(3)$, $E=6$ for $\PSU_6(2)$, $E=2$ for $G_2(3)$ and $G_2(4)$, $E=1$ for $\PSp_6(2)$, $E=2$ for $\Omega_7(3)$ and $F_4(2)$, and $E=6$ for $\Omega_8^+(2)$ and ${}^2E_6(2)$; see the standard automorphism data in \cite{Atlas1985,MalleTesterman2011}.

For $\PSL_3(4)$, the outer automorphism of order $3$ acts nontrivially on the Schur multiplier.  The construction data in \textsf{CTblLib} show that it cyclically permutes the kernels defining the three realizations of each of the cyclic covers $12_1.L_3(4)$ and $12_2.L_3(4)$; hence it does not induce an automorphism of any fixed such cover.  The surviving center-fixing outer automorphisms have exponent at most $2$ for the covers in cases 1--4.  For the two covers of order $12$, the three involutory outer extensions have centers of orders
\[
(6,2,4)\quad\text{for }12_1.L_3(4),\qquad
(6,4,2)\quad\text{for }12_2.L_3(4),
\]
respectively.  Since the center of either cover has order $12$, none of these involutions fixes the center pointwise.  Thus $e_0(Q)=1$ in cases 5 and 6.  For $2.{}^2B_2(8)$, the order-$3$ field automorphism acts nontrivially on the exceptional multiplier of order $4$ and permutes the three double-cover quotients, so it does not induce an automorphism of a fixed double cover; hence $e_0(Q)=1$ in case 15.  These assertions and the displayed center orders are read directly from the \textsf{CTblLib} construction and character-table data; see \cite{CTblLib2025}.

For reproducibility, \texttt{Sname} and \texttt{Qname} are the standard \textsf{CTblLib}/Atlas identifiers corresponding to columns 2 and 3 (for example \texttt{L3(4)}, \texttt{12\_1.L3(4)}, and \texttt{6.2E6(2)}).  Set \texttt{E} equal to the fourth column of Table~\ref{tab:exceptional-residue}.  Then the arithmetic check in each row is:
\begin{verbatim}
s := CharacterTable(Sname);
t := CharacterTable(Qname);
Sord := Size(s);
ords := Set(OrdersClassRepresentatives(s));
faith := Filtered(Irr(t), chi -> ClassPositionsOfKernel(chi) = [1]);
for r in ords do
  good := Filtered(faith, chi -> Sord mod (E*r*chi[1]) = 0);
  Print(r, "  ", List(good, chi -> chi[1]), "\\n");
od;
\end{verbatim}
The table records one degree from each nonempty list; the exceptional degrees displayed there are exactly the rows where the first listed degree does not work.

It follows from Table~\ref{tab:exceptional-residue} and Lemma~\ref{lem:finite-test} that Theorem~\ref{thm:QL} holds for every exceptional cyclic cover left by Proposition~\ref{prop:Lie}.

\section{Proof of the main theorem}

\begin{proof}[Proof of Theorem~\ref{thm:qian-main}]
After the exceptional-cover calculation in Table~\ref{tab:exceptional-residue}, Proposition~\ref{prop:relative-reduction} applies to every chief factor. Abelian chief factors are covered by \cite[Proposition~2.2]{Qian2021}. It remains to pass from the relative divisibility to the codegree.

Suppose that $G$ is a counterexample of least order, and choose $g\in G$ for which the assertion fails.  By \cite[Theorem~A and Remark~3.1]{APS2024}, every minimal normal subgroup of $G$ is abelian.  Hence each of them is elementary abelian.

Let $Z$ be the product of all minimal normal subgroups of $G$.  If $N\lhd G$ is minimal normal and $o(gN)=o(g)$, then the assertion for $G/N$, together with invariance of codegree under inflation, contradicts the choice of $G$.  Thus the order drops modulo every minimal normal subgroup.

For each minimal normal subgroup $N$, choose a prime $p$ for which the $p$-part of $o(gN)$ is smaller than the $p$-part of $o(g)$.  Then
\[
 g^{o(g)/p}\in N\setminus\{1\}.
\]
Since $N$ is elementary abelian, its characteristic is $p$.  Distinct minimal normal subgroups give distinct primes: if $N$ and $M$ gave the same prime $p$, then the unique subgroup of order $p$ in $\langle g\rangle$ would lie in $N\cap M$, whereas distinct minimal normal subgroups have trivial intersection.  Hence the minimal normal subgroups have pairwise coprime orders.  We may therefore write
\[
 Z=N_1\times\cdots\times N_k,
\]
where there are distinct primes $p_1,\ldots,p_k$ such that
\[
 g^{o(g)/p_i}\in N_i\setminus\{1\}.
\]
Since $g$ fixes this nonidentity conjugacy class of the abelian group $N_i$, the Brauer permutation lemma gives a nonprincipal $g$-invariant character $\lambda_i\in\Irr(N_i)$; compare \cite[proof of Theorem~B]{Qian2021}.  Put
\[
 \lambda=\lambda_1\times\cdots\times\lambda_k\in\Irr(Z).
\]
Then $g\in I_G(\lambda)$.

Proposition~\ref{prop:relative-reduction}, with Qian's argument for abelian chief factors and Sections~3--8 for nonabelian chief factors, gives a character $\chi\in\Irr(\lambda^G)$ such that
\[
 o(gZ)\chi(1)\mid |G:Z|.
\]
As in \cite[proof of Theorem~B]{Qian2021},
\[
 o(g)=o(gZ)p_1\cdots p_k
\qquad\text{and}\qquad
p_1\cdots p_k\mid |Z|.
\]
Therefore
\begin{equation}
 o(g)\chi(1)\mid |G|.
\tag{9.1}
\end{equation}

Finally, $\chi$ is faithful.  Indeed, if $\ker\chi\ne1$, then $\ker\chi$ contains a minimal normal subgroup $N_i$.  But $\chi$ lies over $\lambda$, while $\lambda_{N_i}=\lambda_i$ is nonprincipal, a contradiction.  Thus (9.1) yields
\[
 o(g)\mid \frac{|G|}{\chi(1)}=\cod_G(\chi),
\]
contrary to the choice of $g$.
\end{proof}

\section*{Acknowledgements}
This work was partially supported by a grant from the Simons Foundation (\#918096, to YY).

\section*{Disclosure Statement}
The author declares that he has no competing interests and no conflicts of interest.

\section*{Data Availability Statement}
Data sharing is not applicable to this article, as no data sets were generated or analysed during the current study.

\end{document}